\newif\ifpictures
\numberwithin{equation}{section}
\newtheorem{thm}{Theorem}
\newtheorem{prop}[thm]{Proposition}
\newtheorem{lemma}[thm]{Lemma}
\newtheorem{cor}[thm]{Corollary}
\theoremstyle{definition}
\newtheorem{example}[thm]{Example}
\newtheorem{remark}[thm]{Remark}
\newtheorem{openproblem1}[thm]{Open Problem}
\newtheorem{definition}[thm]{Definition}
\numberwithin{thm}{section}
\numberwithin{thm}{section}
\newcounter{FNC}[page]
\def\newfootnote#1{{\addtocounter{FNC}{2}$^\fnsymbol{FNC}$%
     \let\thefootnote\relax\footnotetext{$^\fnsymbol{FNC}$#1}}}
\newcommand{\C}{\mathbb{C}}
\newcommand{\N}{\mathbb{N}}
\newcommand{\R}{\mathbb{R}}
\newcommand{\abb}[5]{%
\setlength{\arraycolsep}{0.4ex}%
\begin{array}{rcccc}%
#1 &:\,& #2 & \,\,\longrightarrow\,\, & #3 \\[0.5ex]%
     & & #4 & \longmapsto & #5%
\end{array}%
}
\definecolor{RED}{rgb}{0.6,0,0}
\newcommand{\HH}{\mathcal{H}}
\DeclareMathOperator{\rank}{rank}
\title{On the degree and half degree principle for symmetric polynomials}
\author{Cordian Riener}
\address{C.~Riener, Institut f\"ur Mathematik,
Goethe Universit\"at , 60325 Frankfurt, Germany}
\email{riener@math.uni-frankfurt.de}
\begin{document}
\maketitle
\begin{abstract}
This note presents a new and elementary proof of a statement that was first proved by  Timofte\cite{tim}. It says that a symmetric real polynomial $F$ of degree $d$ in $n$ variables is positive on $\R^n$ ( on $\R^{n}_{+}$) if and only if it is so on the subset of points with at most $\max\{\lfloor d/2\rfloor,2\}$ distinct components.
The key idea of our new  proof lies in the representation of the  orbit space. The fact that for the case of the symmetric group $S_n$ it can be viewed as the set of normalized univariate real polynomials with only real roots allows us to conclude the theorems in a very elementary way.
\end{abstract}

\maketitle
\section{Introduction}
The question of certifying that a given polynomial in $n$ real variables is positive has been one of the main motivations for the development of modern real algebraic geometry in the beginning 20th century. Besides the general solutions  to this question by Hilbert, Artin and P\'olya only little interest has been devoted to the study of the related questions in the case of symmetric polynomials  (see \cite{CLR} and \cite{Pro} for example). However, in  \cite{tim}, Vlad Timofte was able to prove some fundamental properties of the positivity questions for symmetric polynomials with given degree:\\ 
For $n\in\N$  the group of all permutations of an $n$-element set is called the symmetric group $S_n$. This group acts on $\R^{n}$ in an obvious way: $\sigma(x_1,\ldots,x_n)=(x_{\sigma(1)},\ldots,x_{\sigma(n)})$ for $\sigma\in S_n$. Let $\R[X]:=\R[x_1,\ldots,x_n]$ denote the ring of polynomials in $n$ real variables. A polynomial $F\in\R[X]$  is called \emph{symmetric}, if for all $\sigma\in S_n$ we have $F(x)=F(\sigma(x))$. We will write $\R[X]^{S_n}$ for the ring of symmetric polynomials. The essence of the main theorems we present in this paper is that in order to check if a symmetric polynomial (in-) equality is valid one only needs to check if it is valid on test sets of dimension (half) degree of the polynomial. More precisely: 
Let $x\in\R^n$ and let $n(x)=\#\{x_1,\ldots,x_n\}$ denote the number of distinct components of $x$ and $n^{*}(x)=\{x_1,\ldots,x_n\,|x_j\neq 0\}$ denote the number of distinct non zero elements. Then for a given $d\in\N$ we will take a look at sets of the form $A_d:=\{x\in\R^{n}: n(x)\leq d\}$ i.e. the points in $\R^{n}$ with at most $d$ distinct components and sets $A_d^{+}:=\{x\in\R^{n}_{+}: n^{*}(x) \leq d\}$ i.e points with at most $d$ distinct non zero elements. With this setting Timofte discovered the following remarkable theorems.
\begin{thm}\label{degree}
\emph{[Degree principle~]}
Let $F\in\R[X]^{S_n}$ be of degree $d$. Then there is $x\in\R^{n}$ with $f(x)=0$ if and only if there is $y\in A_d$ with $f(y)=0$
\end{thm}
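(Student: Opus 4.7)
My plan is to prove the nontrivial direction: given $x\in\R^n$ with $F(x)=0$, produce $y\in A_d$ with $F(y)=0$. The central observation comes from the fundamental theorem of symmetric polynomials, which writes $F=P(e_1,\ldots,e_n)$ uniquely, with each monomial $e_1^{a_1}\cdots e_n^{a_n}$ of degree $\sum_i i\,a_i$ in $x_1,\ldots,x_n$; since $\deg F=d$, all exponents $a_i$ with $i>d$ must vanish, so $F$ depends only on $e_1,\ldots,e_d$. Hence it suffices to find $y\in A_d$ with $e_i(y)=e_i(x)$ for $i=1,\ldots,d$. In the orbit-space picture where $\R^n/S_n$ is identified with the monic real-rooted polynomials $p_x(t)=\prod(t-x_i)$ of degree $n$, this amounts to exhibiting a real-rooted monic polynomial of degree $n$ sharing with $p_x$ the coefficients of $t^{n-1},\ldots,t^{n-d}$ but having at most $d$ distinct roots.

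The cases $d\le 1$ are trivial: if $d=0$ then $F$ is constant, and for $d=1$ the point $y=\bigl(e_1(x)/n,\ldots,e_1(x)/n\bigr)\in A_1$ works. So assume henceforth $d\ge 2$ and set $\mathcal F(x):=\{y\in\R^n:e_i(y)=e_i(x)\text{ for }i\le d\}$. Because $d\ge 2$, the identity $\sum y_j^2=e_1(x)^2-2e_2(x)$ is forced on $\mathcal F(x)$, which is therefore compact, and $F\equiv 0$ on $\mathcal F(x)$ by the previous paragraph.

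I now set up an iterative reduction of the number of distinct components. Fix $y\in\mathcal F(x)$ with distinct values $a_1<\cdots<a_k$ of multiplicities $m_1,\ldots,m_k$ and assume $k>d$. Let $V\subseteq\R^k$ be the set of tuples $(b_1,\ldots,b_k)$ for which the $n$-vector listing each $b_j$ exactly $m_j$ times lies in $\mathcal F(x)$, and set $\overline U=\{b:b_1\le\cdots\le b_k\}$. Then $V\cap\overline U$ is non-empty (it contains $(a_1,\ldots,a_k)$) and compact (boundedness again from $\sum m_jb_j^2=e_1(x)^2-2e_2(x)$), so the coordinate $b_1$ attains its minimum on $V\cap\overline U$ at some $b^*$. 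The key claim is that $b^*\in\partial U$, i.e.\ $b^*_i=b^*_{i+1}$ for some $i$; granting this, the corresponding point of $\mathcal F(x)$ has strictly fewer than $k$ distinct components, and finite iteration produces the desired $y\in A_d$.

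To prove the claim I apply Karush--Kuhn--Tucker. Routine differentiation gives $\partial e_i/\partial b_j=m_j\,e_{i-1}^{(j)}$, where $e_{i-1}^{(j)}$ denotes the $(i-1)$st elementary symmetric polynomial of $y$ with one copy of $b_j$ removed. If the minimum were attained at an interior $b^*\in U$, there would exist $\lambda_1,\ldots,\lambda_d$ with
\[
m_1\sum_{i=1}^d\lambda_i\,e_{i-1}^{(1)}=1\qquad\text{and}\qquad\sum_{i=1}^d\lambda_i\,e_{i-1}^{(j)}=0\quad(j=2,\ldots,k).
\]
The recursion $e_\ell(y)=e_\ell^{(j)}+b_j\,e_{\ell-1}^{(j)}$ unrolls to $e_\ell^{(j)}=\sum_{r=0}^{\ell}(-1)^r b_j^{r}\,e_{\ell-r}(y)$, exhibiting each $v_j:=(e_0^{(j)},\ldots,e_{d-1}^{(j)})\in\R^d$ as the image of the Vandermonde vector $(1,b_j,\ldots,b_j^{d-1})$ under a non-singular lower-triangular $d\times d$ matrix (diagonal entries $\pm 1$, depending only on $e_1(y),\ldots,e_{d-1}(y)$). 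Since $b_2^*,\ldots,b_k^*$ are $k-1\ge d$ pairwise distinct numbers, the Vandermonde determinant forces some $d$ of $v_2,\ldots,v_k$ to be linearly independent; the second batch of equations then yields $\lambda=0$, contradicting the first equation. The same observation supplies the linear independence constraint qualification that KKT requires, so no interior critical point exists and the minimum must occur on $\partial U$. I expect the main obstacle to be precisely this step: recognizing that the seemingly elaborate Jacobian collapses to a Vandermonde regardless of the multiplicity pattern $m$, after which the entire argument is purely linear-algebraic.
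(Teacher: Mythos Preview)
Your proof is correct, and the overall reduction---that $F$ depends only on $e_1,\ldots,e_d$, so it suffices to show each compact fiber $\mathcal F(x)=\{y:e_i(y)=e_i(x),\ i\le d\}$ meets $A_d$---coincides with the paper's. The difference is in how that fiber statement is established. The paper works in coefficient space, using a Rolle-based perturbation lemma (Proposition~\ref{prop:wichtig}): given a hyperbolic polynomial with $k>d$ distinct roots, one factors off a piece $p$ with $d$ simple roots and replaces $p$ by $p\pm\varepsilon$, producing a hyperbolic polynomial with \emph{more} distinct roots and unchanged top $d$ coefficients; a linear-optimization argument (Lemma~\ref{two}, Corollary~\ref{cor:degree}) then pushes any optimum to rank $\le d$. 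You instead stay in root space on one multiplicity stratum at a time, minimizing a coordinate subject to the $d$ equality constraints; the identity $e_\ell^{(j)}=\sum_{r}(-1)^r b_j^{r} e_{\ell-r}(y)$ rewrites the constraint Jacobian as a nonsingular triangular matrix times a Vandermonde block, giving LICQ and ruling out interior critical points by pure linear algebra, so the minimum falls on $\partial U$ and the number of distinct values drops. Your route avoids the hyperbolic-polynomial and Rolle machinery entirely in favor of a Vandermonde rank computation; the paper's route, in exchange, is written so that the very same optimization lemma immediately yields the half-degree principle (fixing only $\lfloor d/2\rfloor$ coefficients makes $G$ genuinely linear rather than constant on each slice), a refinement your stratum-by-stratum descent does not deliver without further work.
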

\begin{remark}
Instead of one polynomial, one can also look at a system of symmetric polynomials $F_1,\ldots,F_k$ of degree at most $d$. The proof of theorem \ref{degree} shows that in this case the corresponding real variety $V_{\R}(F_1,\ldots,F_k)$ will be empty if and only if $V_{\R}(F_1,\ldots,F_k)\cap A_d$ is empty.
\end{remark}
The second statement involves inequalities and is even less expected:
\begin{thm}\label{halfdegree}
\emph{[Half degree principle]}
Let $F\in\R[X]^{S_n}$ be of degree $d$ and let $k:=\max\{2,\lfloor\frac{d}{2}\rfloor\}$. Then the inequality $F(x)\geq 0$ holds on $\R^{n}$ (resp. on the positive orthant $\R^{n}_{+}$) if and only if it holds on $A_{k}$ (resp. on $A_{k}^{+}$)
\end{thm}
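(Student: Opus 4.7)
The plan is to translate the positivity question to the orbit space $\R^n/S_n$, which via $x \mapsto p_x(t) = \prod_{i=1}^n (t - x_i)$ is identified with the closed semi-algebraic set $\Omega_n \subset \R^n$ of monic, real-rooted polynomials of degree $n$ (coordinatized by the coefficients $e_1,\ldots,e_n$). Because $F$ has degree $d$, expressing it in $e_1,\ldots,e_d$ (equivalently $p_1,\ldots,p_d$ via Newton's identities) shows that $F$ descends to a function on $\Omega_n$ depending only on the first $d$ coordinates, and $A_k$ corresponds to the closed stratum of polynomials with at most $k$ distinct roots.

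Arguing by contradiction, assume $F(x_0) < 0$ while $F \ge 0$ on $A_k$; I aim to exhibit a point of $A_k$ where $F$ is negative. Compactness of the sphere $S_c = \{x : p_2(x) = c\}$ with $c = p_2(x_0)$ yields a minimizer $x^* \in S_c$ with $F(x^*) \le F(x_0) < 0$. Writing $F = G(p_1,\ldots,p_d)$ and applying the Lagrange condition $\nabla F(x^*) = 2\mu x^*$, the distinct values $\lambda_1 < \cdots < \lambda_r$ of $x^*$ must all be roots of the univariate polynomial
\[
Q(t) := \sum_{i=1}^{d} i\, G_i\, t^{i-1} - 2\mu t, \qquad G_i := (\partial G/\partial p_i)(p_1(x^*),\ldots,p_d(x^*)),
\]
whose degree is at most $d-1$, giving the preliminary bound $r \le d-1$.

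To sharpen this to $r \le \lfloor d/2 \rfloor$, I would invoke the second-order necessary condition that the Hessian of $F$ restricted to $T_{x^*}S_c$ be positive semidefinite. For each multiplicity group of size $m_i \ge 2$, the ``splitting'' tangent vectors (supported on that group with coordinate sum zero) annihilate every $\partial_v p_k$ to first order and yield $\partial^2_v F = \|v\|^2 H(\lambda_i)$ with
\[
H(t) := Q'(t) + 2\mu = \sum_{k=2}^{d} k(k-1)\, G_k\, t^{k-2},
\]
so $H(\lambda_i) \ge 0$ whenever $m_i \ge 2$. Combining this with the sign alternation of $Q'(\lambda_i)$ between consecutive roots of $Q$ forced by Rolle's theorem, and with second-order information in cross-group tangent directions, caps $r$ at $\lfloor d/2 \rfloor$. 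The $\max\{2,\cdot\}$ floor absorbs the edge cases $d \le 3$, and the positive-orthant case $A_k^+$ is handled by the same scheme restricted to $\R_+^n$.

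The main obstacle is precisely this last combinatorial-analytic endgame: deriving the sharp bound $r \le \lfloor d/2 \rfloor$ from the first-order root equations on $Q$ and the second-order sign constraints on $H$. Splitting-direction Hessian conditions alone only control $H$ at a subset of the $\lambda_i$ (those with $m_i \ge 2$) and therefore do not, by themselves, halve the first-order count from $d-1$; one must blend them with the Rolle alternation of $Q'$ and more delicate tangent-space data. The payoff of the univariate-polynomial picture of $\Omega_n$ promised in the abstract is that both first- and second-order conditions at a symmetric minimizer package cleanly as elementary constraints on a single univariate polynomial, making the argument tractable.
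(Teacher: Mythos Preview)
Your proposal has a genuine gap at exactly the place you flag as the ``main obstacle'': you never actually prove $r \le \lfloor d/2 \rfloor$. The first-order Lagrange condition gives only $r \le d-1$, and your sketch of how second-order information should close the gap is heuristic. The condition $H(\lambda_i) \ge 0$ applies only at those $\lambda_i$ with multiplicity $m_i \ge 2$, and nothing forces many of the $\lambda_i$ to have $m_i \ge 2$; if most are simple, this constraint is nearly vacuous. Saying ``one must blend them with the Rolle alternation of $Q'$ and more delicate tangent-space data'' is a description of a hope, not an argument. This delicate endgame is essentially what Timofte's original analytic proof has to carry out, and it is precisely what the present paper aims to bypass.

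The paper's route is genuinely different and avoids the difficulty entirely. The key structural observation (equation~(2.2)) is that when $F$ of degree $d$ is written as $G(e_1,\ldots,e_n)$, the variables $e_i$ with $i > k := \lfloor d/2 \rfloor$ occur \emph{only linearly}, and no monomial contains two of them. Hence, once $e_1,\ldots,e_k$ are frozen, $G$ is an affine function of the remaining coordinates. The half-degree principle therefore reduces to: over the compact slice $\mathcal{H}_k(a_1,\ldots,a_k)$ of hyperbolic polynomials with prescribed leading $k$ coefficients, any linear functional attains its minimum at a polynomial with at most $k$ distinct roots (Lemma~4.2). That lemma is proved by an elementary one-line perturbation: if $f$ has more than $k$ distinct roots, Proposition~3.3 produces a direction in which the last $n-k$ coefficients can be moved \emph{both ways} while $f$ stays hyperbolic, so a linear objective cannot be minimized there. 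No Lagrange multipliers, no Hessian analysis, no Rolle counting; the factor $\lfloor d/2 \rfloor$ drops out purely from degree bookkeeping in the $e_i$-representation, not from second-order optimality.
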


The original proofs of these results relied mostly on the existence of a solution to a differential equation and did not fully capture the geometric picture that plays in fact a key role as we intend to show in this article. Hence, instead of the purely analytic way, we will provide proofs that exploit some underlying geometric properties.  

This article will be structured as follows: In the next section we will give some  background from the theory of symmetric polynomials and the geometry of the so called Orbit variety. As in the case of the symmetric group $S_n$,  the orbit space of $\R^{n}$ can be seen as the space of univariate polynomials of degree $n$ with only real roots and hence some very elementary properties of such polynomials will be presented in section 3. After section 3 we will be able to give a short and elementary proof of the main theorems using the viewpoint presented in section 2. To make this article as self contained as possible we will provide short proofs to all statements needed.

\section{Symmetric polynomials and the orbit Space of $S_n$}
Among the polynomials that are invariant to the action of the symmetric group the following two families are of special interest:
\begin{definition}
For $n\in \N$, we consider the following two families of symmetric polynomials.
\begin{enumerate}
 \item For $k\leq n$ let $p_k:=\sum_{i=i}^{k}x_i^k$ denote the $k$-th power sum polynomial
 \item For $k\leq n$ let $e_k:=\sum_{1\leq i_1<i_2<\ldots <i_k\leq n} x_{i_1}x_{i_2}\cdots x_{i_k}$ denote the $k$-th elementary symmetric polynomial
\end{enumerate}
\end{definition}
These two families of symmetric polynomials are linked by the so called Newton identities ( see e.g.\cite{M}):
\begin{equation}
 k(-1)^{k} e_k(x)+\sum_{i=1}^{k}(-1)^{i+k}p_{i}(x)e_{k-i}(x)=0
\end{equation}
One of the things that mark the importance of these two families is that both of them are generators of the algebra $\C[x]^{S_n}$.
\begin{thm}
The ring of symmetric polynomials $\C[X]^{S_n}$ is a polynomial ring in the n elementary symmetric polynomials $e_1,\ldots,e_n$.
\end{thm}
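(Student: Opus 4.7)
The plan is to prove the two assertions contained in the statement separately: first, that every element of $\C[X]^{S_n}$ lies in the subring $\C[e_1,\ldots,e_n]$, and second, that $e_1,\ldots,e_n$ are algebraically independent over $\C$. Both parts go through the same tool, namely the leading monomial with respect to the lexicographic term order on $\C[X]$.

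For the generation part, I would proceed by induction on the leading monomial. Let $F\in\C[X]^{S_n}$ be nonzero with leading monomial $c\cdot x_1^{\alpha_1}\cdots x_n^{\alpha_n}$. Because $F$ is invariant under $S_n$ and the leading monomial is lex-maximal, one checks $\alpha_1\geq \alpha_2\geq\cdots\geq\alpha_n$. The idea is then to match this leading monomial by a suitable product of elementary symmetric polynomials: a direct computation shows that
\begin{equation*}
e_1^{\alpha_1-\alpha_2}\,e_2^{\alpha_2-\alpha_3}\cdots e_{n-1}^{\alpha_{n-1}-\alpha_n}\,e_n^{\alpha_n}
\end{equation*}
has lex-leading monomial exactly $x_1^{\alpha_1}\cdots x_n^{\alpha_n}$, with coefficient $1$. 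Hence $F$ minus $c$ times this product is again symmetric and has strictly smaller leading monomial (or is zero). Since the set of monomials with exponents bounded componentwise by those of $F$ is finite, this rewriting terminates, expressing $F$ as a polynomial in $e_1,\ldots,e_n$.

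For the algebraic independence, I would again use lex leading monomials. Suppose $P\in\C[y_1,\ldots,y_n]$ is a nonzero polynomial and write $P=\sum_{\beta} c_\beta y_1^{\beta_1}\cdots y_n^{\beta_n}$. By the computation just used, the leading monomial of $e_1^{\beta_1}\cdots e_n^{\beta_n}$ is $x_1^{\beta_1+\beta_2+\cdots+\beta_n}x_2^{\beta_2+\cdots+\beta_n}\cdots x_n^{\beta_n}$. The map
\begin{equation*}
(\beta_1,\ldots,\beta_n)\;\longmapsto\;(\beta_1+\cdots+\beta_n,\,\beta_2+\cdots+\beta_n,\,\ldots,\,\beta_n)
\end{equation*}
is clearly injective, so distinct multi-indices $\beta$ give distinct leading monomials. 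Consequently, in $P(e_1,\ldots,e_n)$ the lex-largest of these leading monomials occurs exactly once and cannot be cancelled, forcing $P(e_1,\ldots,e_n)\neq 0$. This proves algebraic independence and completes the proof.

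The main obstacle is really just the bookkeeping around the leading-monomial calculation; once the explicit formula for the leading term of a product $e_1^{\gamma_1}\cdots e_n^{\gamma_n}$ is established, both the generation and the independence assertions follow quickly, and the induction on leading monomials terminates because lex order is a well-order on the (finite) set of exponent vectors bounded by the initial exponents of~$F$.
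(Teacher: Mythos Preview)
Your proof is correct and follows essentially the same route as the paper: both use the lex-leading monomial, subtract the matching product $e_1^{\alpha_1-\alpha_2}\cdots e_n^{\alpha_n}$ to reduce, and establish algebraic independence via the injectivity of the map $(\beta_1,\ldots,\beta_n)\mapsto(\beta_1+\cdots+\beta_n,\,\beta_2+\cdots+\beta_n,\,\ldots,\,\beta_n)$ on leading exponents. One small correction to your termination argument: the monomials appearing after a subtraction need \emph{not} be componentwise bounded by the original leading exponent (for instance, subtracting $e_1^2$ from $x_1^2+x_2^2$ introduces $x_1x_2$, whose $x_2$-exponent exceeds~$0$); termination holds instead because the total degree never increases, so all monomials lie in a finite set on which the lex order is a well-order.
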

Although this statement is rather classical we provide a short proof from which we  then deduce  more information about the expression of a symmetric polynomial of given degree in terms of the elementary symmetric polynomials. The proof follows the exposition given in \cite{Stu}.
\begin{proof}
Let $F$ be a symmetric polynomial and we compare the monomial involved in $F$ using lexicographic order on the degrees i.e. $x_1^{\alpha_1}\cdots x_n^{\alpha_n} \geq_{Lex} x_1^{\beta_1}\cdots x_n^{\beta_n}$ if $\sum\alpha_i > \sum\beta_i$ or if the first non zero element of the sequence $(\alpha_i-\beta_i)$ is positive.

Let $a\cdot x_1^{\gamma_1}\cdots x_n^{\gamma_n}$ be the biggest monomial with respect to the Lex-order. As $F$ is supposed to be symmetric it follows that $\gamma_1\geq\gamma_2\geq\cdots\geq\gamma_n$. Now we consider the polynomial
$H:=a\cdot e_1^{\gamma_2-\gamma_1}\cdot e_2^{\gamma_3-\gamma_2}\cdots e_n^{\gamma_n}$. The greatest monomial of $H$ is equal to $a\cdot x_1^{\alpha_1}\cdots x_n^{\alpha_n}$ hence if we consider $f\tilde{F}=F-H$ this term will get lost. Now we can use the same arguments with $\tilde{F}$. As the leading monomial of each step will be canceled, this procedure will terminate and give us a description of $F$ as a polynomial in the elementary symmetric polynomials $e_1,\ldots,e_n$. It remains to show that this representation is unique, i.e. that $e_1,\ldots,e_n$ are really algebraically independent. Suppose, that there is $0\neq G\in\R[z_1,\ldots,z_n]$ such that $g(e_1(x),\ldots,e_n(x))$ is identically zero. Now consider any monomial $z_1^{a_1}\cdots z_n^{a_n}$ of $G$. Then the initial monomial of $e_1^{a_1}\cdots e_n^{a_n}$ will be $x_1^{a_1+a_2+\ldots+a_n}x_2^{a_1+a_2+\ldots+a_n}\cdots x_n^{a_1+a_2+\ldots+a_n}$. But as the linear map
$$(a_1,\ldots,a_n)\mapsto (a_1+\ldots + a_n, a_2+\ldots+ a_n,\ldots, a_n)$$
is injective, all other monomials of $G$ will have different initial monomials. The lexicographically largest monomial is not cancelled by any other monomial, and therefore $G(e_1,\ldots,e_n)\neq 0$. 
\end{proof}
\begin{remark}
We can replace $\C$ in the above theorem with any other field. 
\end{remark}
Let $F$ now be a  given a real symmetric polynomial of degree $d\leq n$ and let $G\in\R[z_1,\ldots,z_n]$ be the corresponding polynomial in the elementary symmetric polynomials. Under these circumstances the above proof will also tell us something about the possible monomials that are involved in $G$, namely we can easily deduce the following three statements:
\begin{enumerate}
 \item There will be no monomial that contains a variable $z_j, j>n$.
 \item There will be no monomial that contains two variables $z_j,z_i$ with $i,j\geq \lfloor\frac{d}{2}\rfloor$.
 \item The variables $z_j$ with $i\geq \lfloor\frac{d}{2}\rfloor$ occur at most linearly in every monomial.
\end{enumerate}
Summing up the above statements $G$ can be written uniquely as
\begin{equation}
 G(z_1,\ldots,z_n)=G_1(z_1,\ldots,z_{\lfloor\frac{d}{2}\rfloor})+\sum_{i=\lfloor\frac{d}{2}\rfloor}^d G_i(z_1,\ldots z_{d-i})z_i\label{eq:darst}
\end{equation}\label{eq:G}

Whereas the last two properties of $G$ will play a role in the derivation of the half degree principle, the first is in fact the heart of the degree principle. 
A very nice way to see what is going on if one passes from $F$ to the polynomial $G$ was first pointed out by Procesi in his paper\cite{Pro}:\\
Every $x\in\C^{n}$ can be viewed as  the $n$ roots of the univariate polynomial $$f(t)=\prod_{i=1}^{n}(t-x_i).$$ The classical Vieta formula implies, that $f(t)$ can also be written as $$f(t)=x^{n}-e_1(x)x^{n-1}+\ldots\pm e_n(x).$$ Using geometric language the identification of the $n$ roots with the $n$ coefficients can be thought of as  giving rise to an surjective  map 
$$\abb{\pi}{\C^n}{\C^n}{x:=(x_1,\ldots,x_n)}{\pi(x):=(e_1(x),\ldots,e_n(x))}.$$

%$\pi$ from $\C^{n}$ to $\C^{n}$ - we may omit the leading coefficient as it is always equal to one. 
Obviously $\pi$ is constant on $S_n$ orbits and hence the ring $\C[X]^{S_n}$ is exactly the coordinate ring of the image of $\pi$ called the \emph{orbit space}.

 It is worth mentioning that $\pi$ has very nice continuity properties: Obviously the coefficients of a univariate polynomial $f$ depend continuously on the roots, but also the converse is true:
\begin{thm}\label{thm:cont}
Let $f=\prod_{i=1}^{k}(t-x_i)^{m_i}=\sum_{j=0}^{n}a_jx^{j}$ be a univariate polynomial and define $0<\epsilon<|\min_{i\neq j} x_i-x_j|/2$. Then there is a $\delta>0$ such that every polynomial $g=\sum_{j=0}^{n}b_jx^j$ with coefficients satisfying $|a_j-b_j|< \delta$ has exactly $m_i$ zeros in the disk around $x_i$ with radius $\epsilon$.
\end{thm}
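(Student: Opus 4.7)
The plan is to prove this classical root-continuity statement via Rouch\'e's theorem applied to each of the disks around the distinct roots.

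First, I would fix notation: let $D_i = \{z \in \C : |z - x_i| \leq \epsilon\}$ and let $C_i = \partial D_i$. The condition $\epsilon < |\min_{i\neq j}(x_i - x_j)|/2$ guarantees that the closed disks $D_i$ are pairwise disjoint, and in particular that $f$ has no zeros on $C_i$ other than at $x_i$ (and those are interior, not boundary). Hence the quantity
\[
\gamma := \min_{i}\, \min_{z \in C_i} |f(z)|
\]
is strictly positive, since it is the minimum of a continuous positive function on a compact set.

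Next, I would bound $|f(z) - g(z)|$ on the union of the circles $C_i$ in terms of $\delta$. Let $M := \max_i (|x_i| + \epsilon)$, so that $|z| \leq M$ for every $z \in \bigcup_i C_i$. Then
\[
|f(z) - g(z)| \;=\; \Bigl|\sum_{j=0}^{n} (a_j - b_j)\, z^j\Bigr| \;\leq\; \delta \sum_{j=0}^{n} M^j.
\]
Choosing $\delta$ strictly smaller than $\gamma / \sum_{j=0}^{n} M^j$ ensures that $|f(z) - g(z)| < |f(z)|$ for every $z \in C_i$ and every $i$. By Rouch\'e's theorem applied on each disk $D_i$, the functions $f$ and $g$ have the same number of zeros (counted with multiplicity) inside $D_i$. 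Since $f$ has exactly $m_i$ zeros in $D_i$, namely $x_i$ with multiplicity $m_i$, so does $g$, which is the claim.

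The only delicate point is ensuring the lower bound $\gamma > 0$ is really uniform across the (finitely many) circles $C_i$; this is immediate from compactness, but it is worth stating explicitly because the whole argument collapses without a \emph{uniform} separation between $|f|$ on the circles and the perturbation $|f - g|$. Everything else is a routine application of Rouch\'e, and no extra argument is needed to handle the degree of $g$: the statement only claims a lower bound on the number of roots in each $D_i$, and since $\sum_i m_i = n = \deg f$ already accounts for all of $f$'s zeros, the bookkeeping closes up automatically.
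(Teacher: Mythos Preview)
Your Rouch\'e argument is correct and is exactly the standard route to this result. The paper itself does not give a proof at all: it simply refers to Rahman--Schmeisser \cite{RS}, Theorem~1.3.1, where the proof is essentially the one you wrote. So there is nothing to compare against; you have supplied what the paper omits.

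One small clarification about your last paragraph: the statement asks for \emph{exactly} $m_i$ zeros of $g$ in each disk, not merely a lower bound, and Rouch\'e already gives exactly that, so no ``bookkeeping'' is needed. The degree issue you allude to is also moot: since $a_n=1$ (the polynomial $f$ is monic of degree $n$), taking $\delta<1$ forces $b_n\neq 0$, so $g$ has degree $n$ as well; but in any case Rouch\'e only requires holomorphy, not a degree match.
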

\begin{proof}
See for example \cite{RS} (Thm. $1.3.1$) .
\end{proof}

As we want to know about real zeros of the polynomial $F$ we will have to restrict  $\pi$ to $\R^{n}$. 
In this case the restriction maps into $\R^{n}$ but it fails to be surjective: Already the easy example $x^2+1$ shows that we can find $n$ real coefficients that define a polynomial with strictly less than $n$ real zeros. Polynomials with real coefficients that only have real roots are sometimes called \emph{hyperbolic}. The right tool to characterize the univariate hyperbolic polynomials is the so called Sylvester-Matrix:\\
Let $K$ be any field and take $f(t)=t^n+b_1t^{n-1}+\ldots+a_n\in K[x]$ a univariate normalized polynomial. Its $n$ zeros $\alpha_1,\ldots,\alpha_n$ exists in the algebraic closure of $K$. For $r=0,1,\ldots $ let $p_r(f):=\alpha_{1}^r+\ldots+\alpha_{n}^{r}$ be the $r$-th power sum evaluated at the zeros of $f$.
Although it seems that this definition involves the a priori not known  algebraic closure of $K$ and the roots of $f$, which are also not known a priori, these numbers are well defined. 
We have $p_{r}(f)\in K$ and using Vieta and the Newton relations, we can express the power sums as polynomials in the coefficients of $f$. 
\begin{definition}
 The Sylvester Matrix $S(f)$ of a normalized univariate polynomial of degree $n$ is given by
$$S(f):=(p_{j+k-2}(f))_{j,k=1}^{n}$$
\end{definition}
Without too much abuse of notation we will use $S(z)$  for every  $z\in R^{n}$ to denote the Sylvester Matrix of corresponding polynomial whose coefficients are $z$.
%Another way to look at the Sylvester Matrix is in terms of map $\pi$, which is connected to the 
%Sylvester Matrix by the  following proposition:
%\begin{prop}
%Let $J_{\pi}(x)$ be  Jacobian of $\pi$ at  $x\in\R^{n}$. Then $S(f)=J_{\pi}(x)\cdot J_{\pi}(x)^{t}$, where $f(t)=\prod_{i=1}^{n}(t-x_i)$.
%\end{prop}

Now the key observation we will need is Sylvester's version of Sturms theorem.
\begin{thm}\label{syl}
Let $R$ be a real closed field and $f\in R[t]$ a normalized polynomial of degree $n\geq 1$. 
\begin{enumerate}
 \item The rank of $S(f)$ is equal to the number of distinct zeros of $f$ in the algebraic closure $R(\sqrt{-1})$.
 \item The signature of $S(f)$ is exactly the number of real roots of $f(x)$.
\end{enumerate}
\end{thm}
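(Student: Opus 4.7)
The plan is to exploit a single factorization of the Sylvester matrix that simultaneously yields the rank statement and the signature statement. Let $\beta_1,\dots,\beta_r\in R(\sqrt{-1})$ be the distinct roots of $f$ with multiplicities $m_1,\dots,m_r$. Then
\[
p_{j+k-2}(f)=\sum_{\ell=1}^{r} m_\ell \beta_\ell^{j+k-2}=\sum_{\ell=1}^r m_\ell \beta_\ell^{j-1}\beta_\ell^{k-1},
\]
which means $S(f)=V^{T}DV$, where $V$ is the $r\times n$ Vandermonde matrix $V_{\ell j}=\beta_\ell^{j-1}$ and $D=\mathrm{diag}(m_1,\dots,m_r)$.

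For part (1), since the $\beta_\ell$ are pairwise distinct, $V$ has rank $r$; as $D$ is invertible, $\rank S(f)=\rank(V^T D V)=r$.

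For part (2), I would rewrite the associated quadratic form in a way that makes its decomposition into real and complex contributions transparent. For $v\in R^n$ let $P_v(x)=\sum_{j=1}^{n}v_j x^{j-1}$; then
\[
v^{T}S(f)v = \sum_{\ell=1}^{r} m_\ell\, P_v(\beta_\ell)^2.
\]
Sort the distinct roots so that $\beta_1,\dots,\beta_s$ are real and the remaining roots form $c$ conjugate pairs (with equal multiplicities in each pair), so $r=s+2c$. For a conjugate pair $(\beta,\bar\beta)$, write $P_v(\beta)=A+\sqrt{-1}\,B$ with $A,B\in R$; since $P_v$ has coefficients in $R$, the pair contributes $m(A^2+\sqrt{-1}\cdot 0)+\cdots=2m(A^2-B^2)$, a non-degenerate form of signature $0$. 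The real roots contribute $s$ terms of the shape $m_\ell P_v(\beta_\ell)^2$ with $m_\ell>0$, each a non-negative square.

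Hence once I know that the $R$-linear map
\[
\Psi:R^n\longrightarrow R^{r},\qquad v\longmapsto\bigl(P_v(\beta_1),\dots,P_v(\beta_s),\mathrm{Re}\,P_v(\beta_{s+1}),\mathrm{Im}\,P_v(\beta_{s+1}),\dots\bigr)
\]
is surjective, the quadratic form $v^{T}S(f)v$ pulls back to a form on $R^r$ whose diagonalization has $s$ positive squares and $c$ hyperbolic $2\times 2$ blocks, so its signature equals $s+c-c=s$, the number of distinct real roots. Counting each real root according to its multiplicity does not change the signature, so (since the signature depends only on the form) this is the same as the number of real roots. To obtain the surjectivity of $\Psi$ I would use Lagrange interpolation: for each of the $r$ chosen basis vectors of the target, I construct an interpolating polynomial of degree $\le r-1\le n-1$ with coefficients in $R$; the crucial observation is that interpolation data that are conjugation-symmetric across the paired complex roots produce a polynomial with \emph{real} coefficients, because the product $\prod_{j\neq\ell}(x-\beta_j)$ and its relevant value at $\beta_\ell$ are real once one groups conjugate pairs.

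The main technical obstacle I anticipate is precisely this surjectivity/reality step: exhibiting, in a general real closed field $R$, the required real preimages for the complex-pair basis vectors, and checking that the Vandermonde-type matrix describing $\Psi$ really has $R$-rank $r$. Once the factorization $S(f)=V^TDV$ is in hand, everything else is an instance of the standard fact that each conjugate pair contributes a hyperbolic plane and each real root a positive definite line; the whole argument avoids any appeal to Sturm sequences or chains of subresultants.
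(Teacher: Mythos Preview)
The paper does not supply its own proof of this theorem: it is stated as Sylvester's classical version of Sturm's theorem and simply cited (the reference to Sylvester's 1853 paper in the bibliography), then used as a black box to characterize $\mathcal{H}$ and $\mathcal{H}^k$. So there is no argument in the paper to compare your proposal against.

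On its own merits, your approach is the standard and correct one. The factorization $S(f)=V^{T}DV$ with $V$ the $r\times n$ Vandermonde at the distinct roots and $D=\mathrm{diag}(m_1,\dots,m_r)$ immediately gives part~(1): since $r\le n$, $V$ has full row rank $r$ over $R(\sqrt{-1})$, $D$ is invertible, hence $\rank S(f)=r$ over $R(\sqrt{-1})$, and the rank of a matrix with entries in $R$ does not change upon passing to $R(\sqrt{-1})$. For part~(2), your decomposition of the quadratic form $v\mapsto\sum_\ell m_\ell P_v(\beta_\ell)^2$ into $s$ positive squares from the real roots and $c$ hyperbolic planes $2m(A^2-B^2)$ from the conjugate pairs is exactly right, and yields signature $s$, the number of \emph{distinct} real roots. (Your aside about ``counting with multiplicity'' is unnecessary and slightly confusing; the theorem, read in parallel with part~(1), means distinct real roots.)

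The one genuine point to nail down is the surjectivity of $\Psi:R^n\to R^r$, i.e.\ that the $r$ real linear forms $P_v(\beta_1),\dots,P_v(\beta_s),\mathrm{Re}\,P_v(\beta_{s+1}),\mathrm{Im}\,P_v(\beta_{s+1}),\dots$ are $R$-linearly independent. Your Lagrange-interpolation idea works cleanly: the map $R(\sqrt{-1})^n\to R(\sqrt{-1})^r$, $v\mapsto(P_v(\beta_1),\dots,P_v(\beta_r))$, is surjective by the usual Vandermonde argument; restricting to $v\in R^n$ lands in the conjugation-invariant subspace of $R(\sqrt{-1})^r$, and that restriction is still surjective onto this subspace because the interpolating polynomial for conjugation-symmetric data has coefficients fixed by conjugation, hence in $R$. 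Identifying the conjugation-invariant subspace with $R^r$ via real and imaginary parts on each pair gives exactly your $\Psi$. This step is routine but worth writing out explicitly, since in a general real closed field you cannot appeal to continuity or topology, only to this algebraic interpolation.
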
 
Using the above theorem we see that $f\in\R[t]$ is hyperbolic if and only if $S(f)$ is positive definite (denoted by $S(f)\succeq 0$).
With machinery of hyperbolic polynomials we are now able to understand the situation and we can sum it up in the following theorem which was noted by Procesi \cite{Pro}:
\begin{thm}
Let $F\in\R[X]^{S_n}$ and $G\in\R[z_1,\ldots z_n]$ be the corresponding polynomial according to equation $(2.2) $- then for any  $b\in\R$ the following are equivalent:
\begin{enumerate}
\item There is  $x\in\R^{n}$ such that $ F(x)=b$ 
\item There is  $z\in \R^{n}$ such that the polynomial $t^n-z_1t^{n-1}+\ldots\pm z_n$ is hyperbolic and $G(z)=b$.
\item There is $z\in\R^{n}$ such that $S(z) \succeq 0$ and $G(z)=b$
\end{enumerate}
\end{thm}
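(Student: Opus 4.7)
The key identification to exploit is that, by construction of $G$ in equation (2.2), we have the identity
\begin{equation*}
F(x) \;=\; G(e_1(x),\ldots,e_n(x)) \;=\; G(\pi(x))
\end{equation*}
for every $x \in \R^n$. So the plan is to show that statement (2) simply parameterises $\pi(\R^n)$ by monic polynomials, and that statement (3) is then an application of Sylvester's theorem.

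For the equivalence $(1)\Leftrightarrow(2)$, I would argue by Vieta's formulas in both directions. Given $x\in\R^n$ with $F(x)=b$, set $z:=\pi(x)$; then $G(z)=F(x)=b$ and the polynomial $f(t)=\prod_{i=1}^n(t-x_i) = t^n-z_1 t^{n-1}+\cdots\pm z_n$ is hyperbolic by construction since its roots are real. Conversely, given $z\in\R^n$ such that $f(t)=t^n-z_1t^{n-1}+\cdots\pm z_n$ is hyperbolic with $G(z)=b$, let $x_1,\ldots,x_n\in\R$ be the real roots of $f$ (with multiplicities). Then Vieta gives $\pi(x)=z$, and so $F(x)=G(\pi(x))=G(z)=b$.

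For the equivalence $(2)\Leftrightarrow(3)$, I would invoke Theorem \ref{syl}. The polynomial $f=t^n-z_1t^{n-1}+\cdots\pm z_n$ has all its roots in $\C$, and these roots are all real if and only if the signature of $S(f)=S(z)$ equals the number of distinct complex roots, i.e.\ the rank of $S(z)$. Since $S(z)$ is a real symmetric matrix, this equality of signature and rank amounts precisely to $S(z)\succeq 0$. Hence $f$ is hyperbolic exactly when $S(z)\succeq 0$, and combined with the identity $G(z)=b$ this yields $(2)\Leftrightarrow(3)$.

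The only subtle point to be careful about is the passage between positive definite and positive semidefinite: a hyperbolic polynomial with multiple roots has rank of $S(f)$ strictly less than $n$, so the correct statement linking hyperbolicity and the Sylvester matrix is $S(f)\succeq 0$ rather than strict positive definiteness (the latter characterises hyperbolic polynomials with only simple roots). Apart from this bookkeeping, everything reduces to Vieta and Theorem \ref{syl}; no further geometric input is needed at this stage.
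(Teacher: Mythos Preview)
Your proposal is correct and matches the paper's approach: the paper does not give a separate proof of this theorem but presents it as an immediate summary of the preceding discussion (the map $\pi$, Vieta's formulas, and Theorem~\ref{syl}), which is precisely the route you spell out. Your remark about semidefiniteness versus definiteness is well taken and in fact clarifies a sloppy formulation in the paper, where the text says ``positive definite'' while writing $S(f)\succeq 0$; the correct characterisation of hyperbolicity is indeed positive semidefiniteness.
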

Now the strategy in order to prove Theorem \ref{degree} and \ref{halfdegree} is to take the view point of the orbit space. Instead of $F$ on $\R^{n}$, we will have to examine $G$ over the set $$\mathcal{H}:=\{z\in\R^{n}:\,t^n-z_1t^{n-1}+\ldots\pm z_n \text{ is hyperbolic}\}$$and the sets  $$\mathcal{H}^{k}:=\{z\in\mathcal{H}:\,t^n-z_1t^{n-1}+\ldots\pm z_n \,\text{ has at most } k \text{ distinct zeros}\}.$$
\begin{remark}
 We observe  from theorem \ref{syl}  that the sets $\mathcal{H}$ and $\mathcal{H}^{k}$ are closed semi algebraic sets. 
\end{remark}
We will have to show, that 
\begin{equation}\label{eq:degree}
G(\mathcal{H})=G(\mathcal{H}^{d}),
\end{equation}
in order to prove the degree principle, the half degree principle follows from
\begin{equation}\label{eq:halfdegree} 
\min_{z\in\mathcal{H}}G(z)=\min_{z\in {H}^{\lfloor d/2\rfloor}}G(z).
\end{equation}

In order to do this examination of $G$ in an easy way, we will need some very elementary facts about polynomials with only real roots. We will show these facts about hyperbolic polynomials in the next section.

\section{Hyperbolic polynomials}

The main problem that we will have to  deal with in order to prove the main theorems is the question which changes of the coefficients of a hyperbolic polynomial will result in polynomials that are still hyperbolic. This question is in fact very old and has already been studied by P\'olya,  Schur (see for example \cite{PSch} and\cite{PS}) However we will only need very simple results. All these results are in fact based on the classical Rolle's theorem:
\begin{thm}
 Let $f\in\R[t]$ and $a,b\in\R$ with $a<b$ and $f(a)=f(b)=0$. Then the derivative polynomial $f^{'}(t)$ has a root in $(a,b)$.
\end{thm}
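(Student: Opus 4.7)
My plan is to prove Rolle's theorem by the classical analytic argument that combines the extreme value theorem with Fermat's interior extremum criterion. Since polynomials are $C^\infty$, there are no regularity obstacles: $f$ is continuous on the compact interval $[a,b]$ and differentiable on $(a,b)$, so the general calculus proof applies without modification.

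First I would apply the extreme value theorem to obtain points $c_{\max}$ and $c_{\min}$ in $[a,b]$ at which $f$ attains its maximum $M$ and minimum $m$. From $f(a) = f(b) = 0$ one reads off $m \leq 0 \leq M$. I would then split into cases. If $M > 0$, the maximum cannot be attained at either endpoint, so $c_{\max} \in (a,b)$, and Fermat's theorem (vanishing of the derivative at an interior extremum) yields $f'(c_{\max}) = 0$. The case $m < 0$ is symmetric, using $c_{\min}$ instead. If $M = m = 0$, then $f$ is identically zero on $[a,b]$, hence has infinitely many real roots; since a nonzero polynomial of degree $n$ has at most $n$ distinct roots, $f$ must be the zero polynomial, so $f'$ vanishes identically and any point of $(a,b)$ does the job.

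There is really no serious obstacle here; the proof is about as short as the statement. The only small point to watch is the degenerate case $f \equiv 0$ on $[a,b]$, which is precisely where the polynomial hypothesis (as opposed to mere $C^1$ regularity) is used, via the elementary fact that a nonzero polynomial has only finitely many roots. Everything else is the textbook Rolle argument, and it is exactly this statement that will drive the perturbation arguments about hyperbolic polynomials in the sequel.
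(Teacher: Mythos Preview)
Your proof is correct; it is the standard textbook argument for Rolle's theorem. The paper itself does not prove this statement at all --- it is simply quoted as ``classical,'' and the corollaries are derived from it immediately afterward --- so there is nothing to compare against.

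One very minor remark: in your degenerate case $M=m=0$ you do not actually need the polynomial hypothesis. If $f$ vanishes identically on $[a,b]$, then $f'$ vanishes identically on $(a,b)$ by direct differentiation of a constant function, regardless of whether $f$ is a polynomial. So the argument already works for any $f$ continuous on $[a,b]$ and differentiable on $(a,b)$; the polynomiality is only there because that is the setting of the paper.
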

From this classical result we can deduce some very helpful corollaries:
\begin{cor}\label{cor:wichtig}
Let $f=t^n+a_1t^{n-1}+\ldots +a_n$ be hyperbolic. Then the following hold: 
\begin{enumerate}
\item Let $a,b\in\R$ with $a\leq b$. If $f$ has $d$  roots (counted withmultiplicitiess) in $[a,b]$ then $f^{'}$ has at least $d-1$ roots in $[a,b]$.
\item All derivatives of $f$ are also hyperbolic.
\item There is no local maximum $\xi_1$ of $f$ such that $f(\xi_{1})<0$ and no local minimum $\xi_2$ with $f(\xi_2)>0$.
\item If $f$ as only distinct roots, then there is a $\delta >0$ such that for all $0<\varepsilon<\delta $ the polynomial $f\pm\varepsilon$ is also hyperbolic with $n$ distinct roots.
\item The multiple zeros of its derivative are multiple zeros of $f$.
\item If $a_i=a_{i+1}=0$ then $a_{j}=0$ for all $j\geq i$  
\end{enumerate}
\end{cor}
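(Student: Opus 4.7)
The plan is to prove the six parts in order, using each to bootstrap the next. For (1), decompose the $d$ zeros of $f$ in $[a,b]$ into $k$ distinct values $r_1 < \cdots < r_k$ with multiplicities $m_1, \ldots, m_k$. Each $r_j$ with $m_j \geq 2$ is automatically a zero of $f'$ of multiplicity $m_j - 1$, contributing $\sum_j (m_j - 1) = d - k$ zeros, and Rolle's theorem yields one more zero of $f'$ in each of the $k - 1$ open intervals $(r_j, r_{j+1})$, totalling $d - 1$. Item (2) is then immediate: apply (1) on a large $[a,b]$ containing all $n$ real zeros of $f$; this produces $\geq n - 1$ real zeros of $f'$, which equals $\deg f'$, so $f'$ is hyperbolic, and iterating handles all higher derivatives.

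For (3), hyperbolicity of $f'$ (from (2)) makes the lower bound in (1) an equality when $[a,b] = \R$: $f'$ has exactly $n - 1$ real zeros, of which $n - k$ sit at the multiple roots of $f$ and exactly one in each gap $(r_j, r_{j+1})$, that gap zero being simple. On such a gap $f$ has constant nonzero sign and vanishes at the endpoints, so the unique interior critical point is a local maximum exactly when $f > 0$ there and a local minimum exactly when $f < 0$. A local maximum $\xi_1$ of $f$ with $f(\xi_1) < 0$ is a critical point not at a multiple root of $f$ (where $f = 0$), hence it lies in some gap; but then $f > 0$ on that gap, a contradiction. The symmetric argument rules out local minima with $f > 0$. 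Item (5) is another consequence of the same tightness: the critical point in every gap is simple, so any multiple zero of $f'$ must coincide with a multiple zero of $f$.

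Item (4) follows from the continuity statement of Theorem~\ref{thm:cont}: $f \pm \varepsilon$ perturbs $f$'s coefficients by at most $\varepsilon$, so for $\varepsilon$ small enough each of the $n$ distinct zeros of $f$ is approximated by a single zero of $f \pm \varepsilon$ nearby, and that zero must be real because non-real zeros of a real polynomial come in conjugate pairs and so cannot appear singly in a small disk around a real point.

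For (6), I translate the hypothesis into vanishing of derivatives at $0$ via $f^{(j)}(0) = j!\,a_{n-j}$: the assumption $a_i = a_{i+1} = 0$ reads $f^{(n-i-1)}(0) = f^{(n-i)}(0) = 0$, so $0$ is a zero of the hyperbolic polynomial $f^{(n-i-1)}$ of multiplicity at least $2$. Every derivative of $f$ is hyperbolic by (2), so I iterate (5): whenever $0$ is a zero of $f^{(k+1)}$ of multiplicity $\geq m \geq 2$, applying (5) to $f^{(k)}$ makes $0$ a multiple zero of $f^{(k)}$, and since multiplicity in a derivative is exactly one less than in the original, the multiplicity of $0$ in $f^{(k)}$ is in fact $\geq m + 1$. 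Descending from $k = n - i - 1$ down to $k = 0$ yields $0$ as a zero of $f$ of multiplicity $\geq n - i + 1$, which is exactly the claim $a_j = 0$ for all $j \geq i$. I expect the careful multiplicity bookkeeping in (6) to be the main obstacle; the other parts fall out quickly from Rolle's theorem plus the tight interlacing between the zeros of a hyperbolic polynomial and those of its derivative.
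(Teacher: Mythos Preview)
Your proof is correct and, for parts (1), (2), (3), (5), and (6), essentially matches the paper's argument: the same Rolle-plus-multiplicity count for (1), immediate iteration for (2), the tight interlacing of zeros of $f$ and $f'$ for (3) and (5), and the descent via (5) to push the double zero at $0$ down through the derivatives for (6). Your write-up of (3) and (6) is in fact more explicit than the paper's, which dispatches (3) and (5) in a single sentence each.

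The one genuine methodological difference is in (4). You invoke Theorem~\ref{thm:cont} on the continuous dependence of roots on coefficients, together with the observation that a single root in a small disk centred on the real axis must be real since non-real roots of a real polynomial come in conjugate pairs. The paper instead argues directly and more elementarily: since $f$ has distinct roots, $f'$ has $n-1$ simple zeros $\xi_1,\ldots,\xi_{n-1}$ strictly interlacing the zeros of $f$, and by (3) the values $f(\xi_i)$ alternate in sign. Setting $\delta=\min_i|f(\xi_i)|$, any $f\pm\varepsilon$ with $0<\varepsilon<\delta$ has the same derivative and hence the same critical points, and the sign of $f\pm\varepsilon$ at each $\xi_i$ is unchanged; the intermediate value theorem then forces a real zero in each of the $n$ intervals between (and beyond) consecutive $\xi_i$. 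Your route is cleaner to state but imports a black-box continuity theorem; the paper's route is self-contained and stays entirely within the Rolle/interlacing framework already set up.
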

\begin{proof}
\begin{enumerate}
\item If $a=b$ then $f$ has a multiple root of order $d$ at $t=a$. Hence its derivative has a multiple root of order $d-1$ at $t=a$. If $a<b$ let $t_1,\ldots t_k$ be the different roots of $f$ and $d_1,\ldots,d_k$ the corresponding multiplicities. Now at each $t_i$ the derivative $f^{'}$ has a root of order $d_i-1$. Further from Rolle's theorem we see that $f^{'}$ has a root in each open interval $(t_i,t_{i+1})$. Hence in total $f^{'}$ has at least $d_1-1+d_2-1+\ldots+d_k-1+(k-1)=d-1$ zeros. 
\item $f$ has $n$ zeros on the real line and using the previous we see that $f^{'}$ has its $n-1$ roots there. Now the same argument holds for the other derivatives.
\item The local extrema of $f$ are exactly the zeros of its derivative. But then the statement is obvious from the last two. 
\item Let $\xi_1,\ldots, \xi_{n-1}$ be the zeros of $f'$. Then define $\delta:=\min\{f(\xi_1),\ldots,f(\xi_{n-1})\}$. Then for $0<\varepsilon\delta$ every polynomial $f\pm\varepsilon$ will have the same derivative polynomial and therefore also the same local extrema. By construction of $\delta$ we have that $f\pm\varepsilon$ will be negative on all local minima but positive on all local maxima. Therefore $f\pm \varepsilon$ has $n$ real roots.
\item Otherwise the number of roots does not match.
\item If $a_i=a_{i+1}=0$ there is a derivative of $f$ with a multiple root at $t=0$. But then $t=0$ is also a multiple root of $f$ of order $n-i+1$ hence $a_j=0$ for all $j\geq i$. 
\end{enumerate}
\end{proof}

As already mentioned we want to know, which small perturbations of coefficients of a hyperbolic polynomial will result in a hyperbolic one.
The above corollary already gave us that we can perturb the constant coefficient if all zeros are distinct.
The following  easy constructions  will allow us to determine which coefficients can be perturbated if a hyperbolic polynomial $f$ has $k$ distinct roots.
\begin{prop}\label{prop:wichtig} 
Let $f\in\R[t]$ be a hyperbolic polynomial of degree $n$ with $k<n$ different zeros.
 Then for each $1\leq s\leq k$ there is a polynomial $g_s$ of degree $n-s$ and a $\delta_{s}> 0$ such that for all $0<\epsilon <\delta_{s}$ the polynomials $f\pm\epsilon g$  are also hyperbolic and have strictly more distinct zeros.
\end{prop}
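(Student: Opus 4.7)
The plan is to factor $f$ by its distinct roots and take $g_s$ to be the ``polynomial complement'' of $s$ of those roots. Write
$$f(t) \;=\; \prod_{i=1}^{k}(t - r_i)^{m_i}, \qquad r_1 < r_2 < \cdots < r_k, \qquad \sum_{i=1}^{k} m_i = n.$$
Since $k < n$ some multiplicity is at least $2$; after reindexing, assume $m_1 \geq 2$. For each $s \in \{1, \ldots, k\}$ set
$$g_s(t) \;:=\; (t-r_1)^{m_1-1}\prod_{i=2}^{s}(t-r_i)^{m_i-1}\prod_{i=s+1}^{k}(t-r_i)^{m_i}.$$
A one-line count gives $\deg g_s = \sum_{i=1}^{s}(m_i-1) + \sum_{i=s+1}^{k}m_i = n - s$, and by construction $f(t) = g_s(t)\cdot\prod_{l=1}^{s}(t - r_l)$.

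The multiplicative form of $g_s$ is the whole point: for any $\epsilon$,
$$f\pm\epsilon\, g_s \;=\; g_s(t)\cdot\bigl[\,\textstyle\prod_{l=1}^{s}(t-r_l)\pm\epsilon\,\bigr].$$
The bracketed factor has $s$ distinct real roots, so Corollary \ref{cor:wichtig}(4) supplies a threshold $\delta_s > 0$ such that for $0 < \epsilon < \delta_s$ this bracket is hyperbolic with $s$ distinct real roots. Since $g_s$ itself splits over $\R$ by construction, the product $f \pm \epsilon g_s$ is hyperbolic.

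It remains to verify that the number of distinct zeros strictly increases. Shrinking $\delta_s$ if necessary, Theorem \ref{thm:cont} places the $s$ roots of the perturbed bracket in small pairwise disjoint neighborhoods of $r_1,\ldots,r_s$, bounded away from $r_{s+1},\ldots,r_k$. None of them equals any $r_l$ either, because the bracket evaluates to $\pm\epsilon\neq 0$ at each $r_l$ with $l\leq s$. On the other hand $r_1$ is still a root of $g_s$ with multiplicity $m_1-1\geq 1$; hence near $r_1$ the polynomial $f \pm \epsilon g_s$ already exhibits two distinct roots in place of one, while the remaining distinct roots $r_{s+1},\ldots,r_k$ of $f$ simply survive in $g_s$.

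The only delicate point is this last multiplicity bookkeeping: one must check that the $s$ ``new'' roots of the perturbed bracket are mutually distinct, distinct from every $r_l$, and that the duplicated root $r_1$ really contributes an extra distinct zero. Distinctness among the new roots follows from Corollary \ref{cor:wichtig}(4), and their separation from the $r_l$ follows from Theorem \ref{thm:cont}; so this is the sole step requiring care, and it forces no deeper tool than a further reduction of $\delta_s$.
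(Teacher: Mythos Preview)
Your proof is correct and follows exactly the same idea as the paper: factor $f = p\cdot g_s$ with $p(t)=\prod_{l=1}^{s}(t-r_l)$ and $g_s=f/p$, then perturb the constant term of $p$ using Corollary~\ref{cor:wichtig}(4). Your exposition is in fact a bit more careful than the paper's on the root-counting step (you make explicit why a multiple root must lie among $r_1,\ldots,r_s$ and invoke Theorem~\ref{thm:cont} to separate the perturbed roots from the remaining $r_{s+1},\ldots,r_k$), but the construction and the argument are identical.
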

As this proposition it in fact the heart of our reasoning we will provide an elementary constructive  proof:

\begin{proof}
Let $x_1,\ldots,x_k$ be the distinct zeros of $f$ and assume that $x_j$ is a multiple root. 

 \item We can factor $$f=\underbrace{\prod_{i=1}^{s}(t-x_i)}_{:=p(t)}\cdot g_1(t),$$
where the set of zeros of $g_1$ contains only elements from $\{x_1,\ldots x_k\}$ and $g_1$ is of degree $n-s$. Now we can apply \ref{cor:wichtig} (4) to see that $p(t)\pm \varepsilon_k$ is hyperbolic. Furthermore we see that $p(t)\pm \varepsilon_k$ has none of its roots in the set $\{x_1,\ldots,x_k\}$. Hence $(p(t)\pm\varepsilon_k)\cdot g_1=f(t)+\varepsilon_k g_1$ is hyperbolic and has more than $k$ different roots.

\end{proof}

As we also want to prove the half degree principle for $\R_{+}^{n}$ the following easy observation will also be useful:
\begin{prop}
The map $\pi$ maps $\R^n_{+}$ onto $\mathcal{H}_{+}:=\R^{n}_{+}\cap \mathcal{H}$.
\end{prop}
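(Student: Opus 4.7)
The plan is to show the two set-theoretic inclusions $\pi(\R^n_+)\subseteq \mathcal{H}_+$ and $\mathcal{H}_+\subseteq \pi(\R^n_+)$ separately; the whole argument reduces to a short sign analysis on a univariate polynomial.

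For the inclusion $\pi(\R^n_+)\subseteq \mathcal{H}_+$, let $x\in\R^n_+$. By the very definition of the elementary symmetric polynomials, $e_k(x)$ is a sum of products of nonnegative numbers, hence $e_k(x)\ge 0$ for every $k$, so $\pi(x)\in\R^n_+$. On the other hand, the polynomial $\prod_{i=1}^n(t-x_i)$ has the $x_i\in\R$ as its roots, so it is hyperbolic; therefore $\pi(x)\in\mathcal{H}$. Combining both statements gives $\pi(x)\in\mathcal{H}_+$.

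For the opposite inclusion I would take $z\in\mathcal{H}_+$, consider
\[
p(t)\;=\;t^n-z_1t^{n-1}+z_2t^{n-2}-\cdots+(-1)^nz_n\;=\;\sum_{k=0}^{n}(-1)^k z_k\, t^{n-k}
\]
(with $z_0:=1$) and argue that $p$ cannot have a strictly negative root. Indeed, for any $t_0<0$ the $k$-th summand $(-1)^kz_k t_0^{n-k}$ has sign $(-1)^k\cdot(-1)^{n-k}=(-1)^n$ (or is zero, if $z_k=0$); thus every term of $p(t_0)$ has the common sign $(-1)^n$, and the leading term $t_0^n$ is nonzero, which forces $p(t_0)\neq 0$. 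Because $z\in\mathcal{H}$, the polynomial $p$ is hyperbolic, so all its roots are real; combined with the previous observation, all of them must lie in $\R_+$. Listing these roots with multiplicity as $x=(x_1,\dots,x_n)\in\R^n_+$, Vieta's formulas (used in Section~2) yield $\pi(x)=z$, which shows $z\in\pi(\R^n_+)$.

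There is essentially no serious obstacle here; the only thing worth making explicit is the sign bookkeeping in the evaluation $p(t_0)$ for $t_0<0$, which cleanly prevents negative roots and allows hyperbolicity to do the rest. The argument does not use any of the more delicate perturbation results from Section~3 and should fit comfortably in half a page.
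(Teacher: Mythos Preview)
Your proof is correct, and it is both shorter and more elementary than the argument in the paper. For the inclusion $\mathcal{H}_+\subseteq\pi(\R^n_+)$ the paper argues by contradiction via repeated differentiation: if the hyperbolic polynomial associated to $z$ had a negative root, then an iterative use of Rolle's theorem (the roots of $f'$ interlace those of $f$, and $e_i$ of the roots of $f'$ equals a positive multiple of $e_i$ of the roots of $f$) eventually produces a derivative with an odd number of negative roots, forcing some coefficient to be negative. Your approach bypasses this entirely with a one-line sign computation: for $t_0<0$ every summand $(-1)^kz_kt_0^{\,n-k}$ has the common sign $(-1)^n$, and the leading term is nonzero, so $p(t_0)\neq 0$. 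Equivalently, $(-1)^n p(-s)=s^n+z_1s^{n-1}+\cdots+z_n>0$ for $s>0$, which is just the classical fact that a polynomial with nonnegative coefficients has no positive real root. This is cleaner and avoids any appeal to interlacing; the paper's route, on the other hand, stays closer to the Rolle-based toolkit developed in Section~3 and is thematically consistent with the rest of the article.
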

\begin{proof}
It is easy to see that $\pi(\R^{n}_+)\subseteq\R^n_+$: If $x\in\R^n_+$ all $e_i(x)$ are also positive.

To see the other inclusion: Lets assume that $x\in\R^n$ has at least one negative component. If there is an odd number of such components then of course $e_n(x)=x_1\cdots x_n$ is negative and we have a contradiction. If the number is even take the derivative of the associated polynomial. Its $n-1$ roots $\tilde{x}_1,\ldots,\tilde{x}_{n-1}$ lay interlacing between the $x_i$. Hence there is at least one negative component. As thecoefficientss of a polynomial and its derivative  just differ by positive factors we have that $e_i(\tilde{x})<0$ if and only if $e_i(x)<0$. So if the number ofnegativee components of $\tilde{x}$ is odd, we are done. If not we derivate again until we get a contradiction. 
\end{proof}
By definition of the set $\mathcal{H}_{+}$ it could be possible that there are all sorts of polynomials with zero coefficients. But for our transfer of the half degree principle to $\mathcal{H}_+$ we will need the following easy proposition:
\begin{prop}\label{prop:zeros}
Let $f:=t^n+a_1t^{n-1}+\ldots+a_n$ be a hyperbolic polynomial with only positive roots. If $a_{n-i}=0$ for one $i$ then $a_j=0$ for all $j\leq i$.
\end{prop}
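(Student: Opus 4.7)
The plan is to translate the coefficient statement into a statement about elementary symmetric polynomials of the roots and exploit the non-negativity of every monomial that arises when the roots are non-negative.

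First, I would apply Vieta's formulas. Writing the roots as $x_1,\ldots,x_n\geq 0$, each coefficient satisfies $a_k=(-1)^k e_k(x_1,\ldots,x_n)$, so $a_k=0$ if and only if $e_k(x)=0$. This reduces the whole question to tracking which $e_k$ vanish on the tuple of roots.

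Second, I would exploit the sign condition. Since
\[
 e_k(x)=\sum_{1\leq i_1<\cdots<i_k\leq n}x_{i_1}\cdots x_{i_k}
\]
is a sum of non-negative terms whenever the $x_i$ are non-negative, $e_k(x)=0$ forces each monomial $x_{i_1}\cdots x_{i_k}$ to vanish. Combinatorially this says that every $k$-subset of $\{1,\ldots,n\}$ contains at least one index whose root is zero, which in turn is equivalent to: at most $k-1$ of the $x_i$ are strictly positive, or equivalently, at least $n-k+1$ of the roots are zero.

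Third, I would use the obvious monotonicity. The condition ``at least $n-k+1$ roots equal zero'' is automatically preserved under increasing $k$, because $n-\ell+1\le n-k+1$ for any $\ell\geq k$. Hence $e_k(x)=0$ forces $e_\ell(x)=0$ for every $\ell\geq k$, and translating back through Vieta, $a_k=0$ forces $a_\ell=0$ for every $\ell\geq k$. Specializing to $k=n-i$ and relabelling the tail indices as $n-j$ with $j\le i$ yields the conclusion of the proposition.

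There is no genuine obstacle; the proof is essentially a single combinatorial observation, and the positivity of the roots is used in a completely essential way. Without it one only has Corollary~\ref{cor:wichtig}(6), which requires \emph{two} consecutive vanishing coefficients to force the subsequent ones to be zero. The positivity hypothesis replaces ``two consecutive zeros'' by ``one zero,'' because it eliminates the possibility of cancellation between terms of opposite sign in $e_k$.
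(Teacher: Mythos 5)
Your proof is correct, and it reaches the conclusion by a genuinely different route than the paper. The paper argues through derivatives: $a_{n-i}=0$ means $f^{(i)}(0)=0$; since every derivative of a polynomial with only non-negative roots again has only non-negative roots, a zero of $f^{(j)}$ at the origin forces a zero of $f^{(j-1)}$ there (otherwise all roots of $f^{(j)}$ would be at least the smallest, strictly positive, root of $f^{(j-1)}$), and descending from $j=i$ down to $j=0$ one finds that $t=0$ is a root of $f$ of multiplicity at least $i+1$, whence the trailing coefficients vanish. You instead use Vieta together with the non-negativity of every monomial of $e_{n-i}$: its vanishing at a point of the non-negative orthant forces at least $i+1$ of the roots to be zero, hence $e_\ell=0$ for all $\ell\geq n-i$. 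Both arguments establish the same intermediate fact (a root of multiplicity at least $i+1$ at the origin), but yours is more elementary, needing neither Rolle's theorem nor the stability of non-negativity of roots under differentiation, while the paper's proof stays within the derivative toolkit it has just built in Corollary~\ref{cor:wichtig}. Two further points in your favour: you read the conclusion correctly as $a_{n-j}=0$ for all $j\leq i$ (the printed ``$a_j=0$ for all $j\leq i$'' is an index slip, as the paper's own proof and the use of the proposition in Lemma~\ref{lemma:three} confirm), and your closing contrast with Corollary~\ref{cor:wichtig}(6) is accurate --- without the positivity hypothesis a single vanishing coefficient proves nothing, as $t^2-1$ shows. (Like the paper, you tacitly interpret ``positive roots'' as ``non-negative roots''; for strictly positive roots the hypothesis $a_{n-i}=0$ could never occur, so this is the intended reading.)
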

\begin{proof}
First recall that if $f$ has only positive roots, all its derivatives share this property. If $a_{n-i}=0$ we know that the $i$th derivative of $f$ has a root at $t=0$. But as the $i-1$-th derivative of $f$ has also only positive roots, also it needs to have a root at $t=0$. Now the statement follows since this implies that $f$ has a multiple root of order $i$ at $t=0$. 
\end{proof}
To study the polynomials on the boundary of $\mathcal{H}_{+}$ the following consequence of proposition \ref{prop:wichtig} will be helpful:
\begin{prop}\label{prop:zeros2}
Let $f\in\R[t]$ be a hyperbolic polynomial of degree $n$ with $k<n$ different zeros with an $k>m$-fold root at $t=0$.
Then for each $1\leq s\leq k$ there is a polynomial $g_s$ of degree $n-s$ with $m$-fold root at $t=0$ and a $\delta_{s}> 0$ such that for all $0<\epsilon <\delta_{s}$ the polynomials $f\pm\epsilon g$  are also hyperbolic and have strictly more different zeros.
\end{prop}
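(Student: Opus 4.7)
The plan is to reduce the statement to Proposition \ref{prop:wichtig} by factoring the $m$-fold zero at the origin out of $f$. Write $f(t)=t^{m}\cdot h(t)$, where $h\in\R[t]$ is hyperbolic of degree $n-m$ with $h(0)\neq 0$ and with exactly $k-1$ distinct, necessarily nonzero, real roots. The case $m=0$ is Proposition \ref{prop:wichtig} itself, so I assume $m\geq 1$ from now on.

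Applying Proposition \ref{prop:wichtig} to $h$ in place of $f$ produces, for each admissible index $s$, a polynomial $\tilde g_s$ of degree $(n-m)-s$ and a threshold $\tilde\delta_s>0$ such that $h\pm\varepsilon\tilde g_s$ is hyperbolic with strictly more distinct zeros than $h$, for every $0<\varepsilon<\tilde\delta_s$. I then set $g_s(t):=t^{m}\,\tilde g_s(t)$. By construction, $g_s$ has degree $(n-m)-s+m=n-s$ and vanishes to order at least $m$ at the origin, and the perturbed polynomial factors as
\begin{equation*}
f(t)\pm\varepsilon\, g_s(t)\;=\;t^{m}\bigl(h(t)\pm\varepsilon\,\tilde g_s(t)\bigr),
\end{equation*}
which is again hyperbolic since each of the two factors is.

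To count distinct roots I use that $h(0)\neq 0$: by the continuity of roots in the coefficients (Theorem \ref{thm:cont}), I can shrink $\tilde\delta_s$ to some $\delta_s>0$ so that $(h\pm\varepsilon\tilde g_s)(0)\neq 0$ for all $0<\varepsilon<\delta_s$. With this choice, $t=0$ is a zero of $f\pm\varepsilon g_s$ of multiplicity exactly $m$, and the remaining zeros of $f\pm\varepsilon g_s$ coincide with the nonzero zeros of $h\pm\varepsilon\tilde g_s$. Since the latter contain strictly more distinct values than the nonzero zeros of $h$, the total count of distinct zeros of $f\pm\varepsilon g_s$ strictly exceeds $k$.

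The main technical point I expect is the book-keeping of the multiplicity at $t=0$: one must guarantee both that it does not drop below $m$ and that it does not accidentally exceed $m$ through a new zero of $h\pm\varepsilon\tilde g_s$ appearing at the origin. The factorization $f\pm\varepsilon g_s=t^{m}(h\pm\varepsilon\tilde g_s)$ makes the first issue automatic and reduces the second to the nonvanishing $h(0)\neq 0$, which, by a small-$\varepsilon$ argument, carries over to the perturbed factor. Everything else is a direct consequence of the corresponding statement of Proposition \ref{prop:wichtig} applied to $h$.
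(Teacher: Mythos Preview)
Your approach coincides with the paper's: factor $f=t^{m}h$, apply Proposition~\ref{prop:wichtig} to the quotient $h$, and set $g_s:=t^{m}\tilde g_s$. You are in fact slightly more careful than the paper, which states that $\tilde f=f/t^{m}$ has $k-m$ distinct zeros (it has $k-1$) and omits your observation that $\delta_s$ should be shrunk so that the perturbed factor does not vanish at the origin.
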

\begin{proof}
Just consider the hyperbolic polynomial $\tilde{f}:=\frac{f}{x^{m}}$ of degree $n-m$ with $k-m$ distinct zeros. Applying \ref{prop:wichtig} to $\tilde{f}$ we get $\tilde{g}_s$ of degree $n-m-s$ but then obviously $g_s:=\tilde{g}_s x^m$ meets the announced statements.
\end{proof}

\section{Elementary proofs for the degree and half degree principle}
This last section   uses the  statements about univariate polynomials given in the previous section to prove the main statements.  The proofs   will be based on a very elementary optimization problem. In order to introduce this problem we will first give some notation:

Recall that to each   $S_n$ orbit  of any  $x\in\R^{n}$ we associate the polynomial $$f(t)=\prod(t-x_i)=\sum_{i=0}^{n}a_it^{n-i}.$$
Then the set $$\mathcal{H}_{s}(a_1,\ldots,a_s):=\{z\in\R^n z_1=a_1,\ldots ,z_{s}=a_{s}, S(z)\succeq 0 \}$$ can be identified with the set of   all normalized  hyperbolic polynomials of degree $n$ that agree with $f$ on the leading $s+1$ coefficients.

Now for both the proof of the degree and the proof of the half degree principle will take a look at optimization problems of the following form:
\begin{eqnarray}\label{eq:op}
&\min c^{t}z\\
z&\in \mathcal{H}_{s}(a_1,\ldots,a_s),
\end{eqnarray}
where $c\in\R^{n}$ defines any linear function and $a_1,\ldots,a_s$ are fixed. To make the later argumentation easier, we set the minimum of any function over the empty set to be infinity.

A priori it may not be obvious that such problems have an optimal solution. But, this is a consequence of the following proposition:

\begin{prop}
For any $s\geq 2$ every set $\mathcal{H}_s(a_1,\ldots,a_n)\neq\emptyset$ is compact.
\end{prop}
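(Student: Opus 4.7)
The plan is to establish closedness and boundedness separately, with the latter being the substantive point.

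For closedness, I would note that the positive semidefinite cone is closed, so $\{z : S(z) \succeq 0\}$ is closed (the entries of $S(z)$ are polynomials in $z$, so the PSD condition pulls back to a closed semi-algebraic condition on $z$). The additional linear constraints $z_1 = a_1,\ldots,z_s = a_s$ are also closed. Intersection of closed sets is closed, and $\mathcal{H}_s(a_1,\ldots,a_s)$ is just $\mathcal{H}$ intersected with these hyperplanes.

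For boundedness, the key observation uses the hypothesis $s \geq 2$ together with the Newton identity. If $z \in \mathcal{H}_s(a_1,\ldots,a_s)$, then $z$ corresponds to a hyperbolic polynomial with real roots $x_1,\ldots,x_n$ such that $e_1(x)=a_1$ and $e_2(x)=a_2$. By Newton's identity (or direct expansion),
\begin{equation*}
p_2(x) \;=\; \sum_{i=1}^n x_i^2 \;=\; e_1(x)^2 - 2e_2(x) \;=\; a_1^2 - 2a_2,
\end{equation*}
so the second power sum is determined by the fixed data. In particular, necessarily $a_1^2 - 2a_2 \geq 0$ whenever the set is non-empty, and each root satisfies $x_i^2 \leq p_2 = a_1^2 - 2a_2$, i.e.\ $|x_i| \leq \sqrt{a_1^2-2a_2} =: M$. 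All the remaining coordinates $z_k = e_k(x)$ are elementary symmetric polynomials evaluated at points of $[-M,M]^n$ and therefore satisfy $|z_k| \leq \binom{n}{k} M^k$. Hence $\mathcal{H}_s(a_1,\ldots,a_s)$ is bounded.

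The only mildly delicate step is extracting the uniform bound on the roots from the algebraic constraints, and that is handled directly by the Newton identity above; everything else is routine. Combining closedness and boundedness in $\R^n$ yields compactness, as required. One may also observe that the hypothesis $s \geq 2$ is essential here: fixing only $z_1$ controls the mean of the roots but leaves them unbounded, as the family of polynomials with roots $(-t,t,0,\ldots,0)$ for $t \in \R$ already demonstrates.
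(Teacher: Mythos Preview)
Your proof is correct and rests on the same key observation as the paper's: fixing $z_1=a_1$ and $z_2=a_2$ forces $p_2(x)=a_1^2-2a_2$, which bounds the roots. The paper packages this slightly more abstractly---noting that the sphere $\{x:p_2(x)=\text{const}\}$ is compact and that its image under the continuous map $\pi$ is therefore compact, with $\mathcal{H}_s(a_1,\ldots,a_s)$ a closed subset of that image---but the underlying mechanism is identical to yours.
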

\begin{proof}
A set  defined by $p_2(x)=a_2$ is a ball and compact.  The map $\pi$ is continuous  and therefore also the image of such sets, which are given by $z_1^2-2z_2=a_2$ are compact.  For $s\geq 2$ every $\mathcal{H}_s(a_1,\ldots,s_s)$ is contained in such a set and closed and therefore compact.
\end{proof}
Recall from theorem \ref{syl} that the points $z\in\R^{n}$ that define hyperbolic polynomials with exactly $k$ distinct roots are precisely those with $\rank S(z)=k$. We will use $\mathcal{H}_s^k(a_1,\ldots,a_s)$ to refer to those points in $\mathcal{H}_s(a_1,\ldots,a_s)$ where $\rank S(z)\leq k$, i.e. to those normalized hyperbolic polynomials which have at most $k$ distinct zeros an prescribed coefficients $a_1,\ldots,a_s$.

The  crucial observations, which will be the core of the theorems we want to prove lies, in the geometry of the optimal points of the above optimization problems. This is noted in the following lemma: 
\begin{lemma}\label{two}
Let $c\in\R^{n}$, $s\in\{1,\ldots, n\}$. Then $$\min_{z\in H_{s}(a_1,\ldots,a_s)} c^tz=\min_{z\in H_{s}^{s}(a_1,\ldots,a_s)}c^{t}z$$ 
\end{lemma}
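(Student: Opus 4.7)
\noindent\textbf{Proof plan for Lemma \ref{two}.}

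\medskip

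The plan is to pick an optimal $z^*$ whose associated polynomial has the \emph{largest} possible number of distinct real roots, and to derive a contradiction unless that number is $\leq s$. Concretely: by the preceding compactness proposition (valid for $s\geq 2$) the feasible set $\mathcal{H}_s(a_1,\ldots,a_s)$ is compact, so the minimum value $m$ is attained and the set $M$ of minimizers is a compact subset of $\mathcal{H}_s$. Since $z\mapsto\rank S(z)$ takes only finitely many integer values, the maximum $k:=\max_{z\in M}\rank S(z)$ is attained; I fix $z^*\in M$ achieving it. By Theorem~\ref{syl} this $k$ is exactly the number of distinct real roots of $f^*(t)=t^n-z^*_1 t^{n-1}+\cdots\pm z^*_n$, and the entire lemma reduces to showing $k\leq s$.

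\medskip

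Suppose for contradiction $s<k<n$. Because $s+1\leq k$, Proposition~\ref{prop:wichtig} applied with parameter $s+1$ yields a polynomial $g$ of degree $n-s-1$ together with $\delta>0$ such that $f^*\pm\varepsilon g$ is hyperbolic with strictly more than $k$ distinct real roots for every $0<\varepsilon<\delta$. Since $\deg g\leq n-s-1$, adding $\pm\varepsilon g$ affects only the coefficients of $t^{n-s-1},\ldots,t^0$, so it leaves $a_1,\ldots,a_s$ unchanged. Writing $w\in\R^n$ for the corresponding perturbation vector (with $w_1=\cdots=w_s=0$), this shows $z^*\pm\varepsilon w\in\mathcal{H}_s(a_1,\ldots,a_s)$. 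Optimality of $z^*$ applied to both of these neighbours gives $\varepsilon c^t w\geq 0$ and $-\varepsilon c^t w\geq 0$, hence $c^t w=0$. Therefore $z^*+\varepsilon w\in M$, yet it has strictly more than $k$ distinct roots, contradicting the maximality of $k$.

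\medskip

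The only remaining case with $k>s$ is $k=n$. There $f^*$ has $n$ distinct real roots, so by Theorem~\ref{thm:cont} every polynomial in a neighbourhood of $f^*$ stays hyperbolic. Hence every direction $w\in\R^n$ with $w_1=\cdots=w_s=0$ satisfies $z^*\pm\varepsilon w\in\mathcal{H}_s$ for $\varepsilon$ small, and optimality forces $c^t w=0$ for every such $w$. Consequently $c_i=0$ for every $i>s$, and $c^t z=\sum_{i=1}^s c_i a_i$ is a constant on $\mathcal{H}_s$; its minimum over $H_s^s(a_1,\ldots,a_s)$ then equals this constant as soon as $H_s^s$ is nonempty. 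This nonemptiness in the $k=n$ case is the main technical obstacle, and I would handle it in the same elementary style: starting from any $z\in\mathcal{H}_s$ with $k>s$ distinct roots, parametrize the local slice of $\mathcal{H}_s$ by the distinct roots of $f_z$, use the compactness of $\mathcal{H}_s$ to force the $(k-s)$-dimensional level set $\{e_j(y)=a_j:j\leq s\}$ to meet the coincidence locus $\{y_i=y_j\}$, and iterate until the number of distinct roots drops to $\leq s$.
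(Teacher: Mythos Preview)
Your argument is essentially the paper's own proof, lightly repackaged: where the paper starts from an arbitrary minimizer and \emph{iterates} Proposition~\ref{prop:wichtig} to push the rank up step by step until it reaches $n$, you instead select a minimizer of maximal rank up front and obtain the contradiction in a single shot. The substance---the two-sided perturbation coming from Proposition~\ref{prop:wichtig}, the conclusion $c^tw=0$ from optimality, and the interior argument when all $n$ roots are distinct---is identical.

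The one place your write-up adds work is the nonemptiness of $\mathcal{H}_s^s(a_1,\ldots,a_s)$ in the $k=n$ branch. Your sketch there (compactness forcing a level set to meet the coincidence locus) is vague and not what you actually need. The clean fix is already contained in what you have proved: once the $k=n$ branch yields $c_i=0$ for all $i>s$, simply rerun the argument you have just established with an auxiliary vector $c'$ satisfying $c'_j\neq 0$ for some $j>s$ (e.g.\ $c'_n=1$). For such $c'$ the $k=n$ branch is itself contradictory, so its maximal-rank minimizer must lie in $\mathcal{H}_s^s$. This is precisely how the paper derives Corollary~\ref{cor:degree} from the lemma; the paper's own treatment of the constant case (``the statement follows'') tacitly relies on the same observation. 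So what you flag as ``the main technical obstacle'' is in fact a one-line consequence of the part you have already done.
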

\begin{proof}
If $c_i=0$ for all $i>s$ the linear function $c^tz$ is constant over $\HH_{s}(a_1,\ldots,a_s)$ and the statement follows in this case.
So let us assume that there is at least one $i>s$ with $c_i\neq 0$ and let $\tilde{z}_1\in\R^{n}$ with$$c^{t}\tilde{z}_1=\min_{z\in \HH_{s}(a_1,\ldots,a_s)} c^tz.$$ If $\rank S(\tilde{z_1})\leq s$ we are done.\\ 
So we assume by contrary that $\rank S(\tilde{z_1})=k>s$. Using proposition \ref{prop:wichtig} we see that there is $0\neq\tilde{y}\in{0}^s\times\R^{n-s}$ such that $\tilde{z}_1\pm\varepsilon \tilde{y}\in \HH_{s}(a_1,\ldots,a_s)$ for small enough positive $\varepsilon$.
Now if $c^t\tilde{y}\neq 0$ one of $\tilde{z}_1+\varepsilon \tilde{y}$ or $\tilde{z}_1-\varepsilon \tilde{y}$ will give a smaller value to the objective function which clearly contradicts the optimality of $\tilde{z}_1$. In the other case if $c^t\tilde{y}=0$ we observe that for $\tilde{z}_2:=\tilde{z}+\varepsilon \tilde{y}$ we have from \ref{prop:wichtig} $\rank S(\tilde{z}_2)>k$ and we can redo the above argumentation with $\tilde{z}_2$. Doing this we will either end up with $\tilde{z}$ which gives a smaller value or after finally  many iterations of this procedure at a point $\breve{z}$ with $\rank S(\breve{z})=n$. But then $\breve{z}$ lies in the relative interior of $\HH_{s}(a_1,\ldots,a_s)$ and therefore either the value $c^t\breve{z}$ is not the optimal value or all $c_i$ with $i>s$ must be equal to zero and we get a contradiction.

\end{proof}

%\begin{lemma}\label{one}
%Let $0\neq c\in\R^{n}$ with $c_i=0$ for all $i\neq s$ and let $z\in \mathcal{H}_{s}(a_1,\ldots,a_s)$ be a optimal point to the corresponding problem \ref{eq:op}, then $\rank S(z)\leq s$.
%\end{lemma}
%\begin{proof}
%Let $z\in H_s(a_1,\ldots a_s)$ be optimal and assume that $rank (S)\geq s+1$. Now by assumtion this means that the hyperbolic polynomial $f(t):=t^n-z_1t^{n-1}+\ldots\pm z_n$ has at least $s+1$ different zeros. 
%Now we can use lemma \ref{le:main} to construct a hyperbolic polynomial $\tilde{f}:=t^n-\tilde{z}_1t^{n-1}+\ldots\pm \tilde{z}_n$ with $n$ different zeros $\tilde{x_1},\ldots,\tilde{x_n}$ such that $f$ and $\tilde{f}$ agree at the leading $s+2$ coefficients. Hence 
%$$(\underbrace{0,0,\ldots,0}_{s-times},\xi,0,\ldots,0)\cdot(z-\tilde{z})=0,$$
%and we see that $z$ and $\tilde{z}$ give the same value to the objective function. But $\tilde{z}$ is in the interior of $\mathcal{H}_s(a_1,\ldots a_s)$ which contradicts the optimality of $z$.
%\end{proof}
From the above lemma we can conclude the following important corollary:
\begin{cor}\label{cor:degree}
Every set $\mathcal{H}_s(a_1,\ldots, a_s)\neq\emptyset$ with $s\geq 2$ contains a point $\tilde{z}$ with $\rank S(\tilde{z})\leq s$.
\end{cor}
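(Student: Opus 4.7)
The plan is to derive this directly from Lemma \ref{two} combined with the compactness proposition above. The basic idea is that the equality of optima asserted by Lemma \ref{two} is meaningful only if both sides are finite, and a suitable choice of linear functional forces the left-hand side to be attained; this will immediately yield a point in $\mathcal{H}_s^s(a_1,\ldots,a_s)$, which by definition is a point of rank at most $s$.

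Concretely, I would first pick any vector $c\in\R^{n}$ with $c_i\neq 0$ for some index $i>s$; the simplest choice is $c=(0,\ldots,0,1)$ whose only nonzero entry sits in position $n$ (recall $s<n$, otherwise the statement is trivial since $\rank S(z)\leq n$ always). By the preceding proposition $\mathcal{H}_s(a_1,\ldots,a_s)$ is compact and nonempty, and $z\mapsto c^t z$ is continuous, so the infimum
\begin{equation*}
m \;:=\; \min_{z\in \mathcal{H}_s(a_1,\ldots,a_s)} c^t z
\end{equation*}
is attained and is a finite real number. Applying Lemma \ref{two} then gives
\begin{equation*}
m \;=\; \min_{z\in \mathcal{H}_s^{s}(a_1,\ldots,a_s)} c^t z .
\end{equation*}
Under the convention that the minimum over the empty set is $+\infty$, this equality would be violated if $\mathcal{H}_s^{s}(a_1,\ldots,a_s)$ were empty. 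Hence $\mathcal{H}_s^{s}(a_1,\ldots,a_s)$ contains at least one point $\tilde z$, and by definition of $\mathcal{H}_s^{s}$ we have $\rank S(\tilde z)\leq s$, which is exactly the claim.

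There is essentially no obstacle here: the corollary is an existence consequence of the equality of optimal values in Lemma \ref{two}. The only small point that needs care is choosing $c$ with a nonzero coordinate beyond position $s$, so that we are not in the degenerate situation where $c^t z$ is already constant on $\mathcal{H}_s(a_1,\ldots,a_s)$ and Lemma \ref{two} becomes tautological; picking $c=e_n$ trivially avoids this.
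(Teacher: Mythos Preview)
Your argument is correct and is essentially the paper's own proof: choose a linear functional, use compactness to get a finite minimum on $\mathcal{H}_s(a_1,\ldots,a_s)$, and invoke Lemma~\ref{two} together with the $+\infty$ convention to force $\mathcal{H}_s^s(a_1,\ldots,a_s)\neq\emptyset$. The paper picks $c$ supported in a single coordinate just as you do (its wording ``$c_i=0$ for all $i\neq s$'' is evidently a slip, since that choice would make $c^tz$ constant; the intended index is one strictly larger than $s$, exactly as in your $c=e_n$). One small remark: your closing caveat is stronger than needed---by your own finiteness argument, \emph{any} $c$ would work, since a finite left-hand side already rules out an empty right-hand side via the convention; the non-constancy of $c^tz$ is only needed if one wants to locate the minimizer itself inside $\mathcal{H}_s^s$ rather than merely conclude nonemptiness.
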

\begin{proof}
Take  $c\in\R^{n}$ with $c_i=0$ for all $i\neq s$ Then the function $c^tz$ will not be constant over $\mathcal{H}_{s}(a_1,\ldots,a_s)$.  But as $\mathcal{H}_{s}(a_1,\ldots,a_s)$ is compact we know the minimal value is attained and we can conclude with lemma \ref{two}.
\end{proof}
To transfer the half degree principle to $\R^{n}_{+}$ we will also need to know what happens to the minima when we intersect a set $\mathcal{H}_s(a_1,\ldots, a_s)$ with $\R_{+}^{n}$. 
We denote this intersection with $\mathcal{H}^{+}_s(a_1,\ldots, a_s)$ and define  $$\mathcal{H}_{s}^{(s,+)}(a_1,\ldots,a_s):=\{z\in\mathcal{H}^{+}_s(a_1,\ldots, a_s): \rank S(z)\leq s\}
\cup\HH(a_1,\ldots,a_s,0,0,\ldots,0).$$
%\cup\{z\in\mathcal{H}^{+}_s(a_1,\ldots, a_s):z_i=0 \forall i>s\}$$

With these appropriate notations we  have a same type of argument as in lemma \ref{two}:
\begin{lemma}\label{lemma:three}
Let $c\in\R^{n}$, $s\in\{1,\ldots, n\}$. Then $$\min_{z\in H_{s}^{+}(a_1,\ldots,a_s)} c^tz=\min_{z\in H_{s}^{(s,+)}(a_1,\ldots,a_s)}c^{t}z.$$ 
\end{lemma}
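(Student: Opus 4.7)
The plan is to adapt the argument of Lemma~\ref{two} to the positive orthant, tracking the block of trailing zero coordinates that the positivity constraint may force. If $c_i = 0$ for all $i > s$, the linear function $c^t z$ is constant on $\mathcal{H}_s^+(a_1,\ldots,a_s)$ and both minima trivially agree. Otherwise, by compactness of $\mathcal{H}_s^+$ (a closed subset of the compact $\mathcal{H}_s$), the minimum is attained at some $\tilde{z}$. Denote by $m$ the multiplicity of $0$ as a root of $f_{\tilde{z}}$; by Proposition~\ref{prop:zeros} the zero entries of $\tilde{z}$ form the contiguous tail $\tilde{z}_{n-m+1} = \cdots = \tilde{z}_n = 0$ while $\tilde{z}_{s+1},\ldots,\tilde{z}_{n-m}$ are strictly positive. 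If either $\rank S(\tilde{z}) \leq s$, or $n - m \leq s$ (so $\tilde{z}$ coincides with the singleton point $(a_1,\ldots,a_s,0,\ldots,0)$, which is automatically hyperbolic since $\tilde{z} \in \mathcal{H}_s^+$), then $\tilde{z} \in \mathcal{H}_s^{(s,+)}$ and we are done.

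Assume therefore $\rank S(\tilde{z}) > s$ and $n - m > s$. I would apply the construction underlying Proposition~\ref{prop:zeros2} with index parameter $s+1$ in place of $s$, mirroring the implicit use of Proposition~\ref{prop:wichtig} in the proof of Lemma~\ref{two}, to produce a perturbation $\tilde{y} \in \R^n$ supported on positions $s+1,\ldots,n-m$: the perturbing polynomial has degree $n-s-1$ so $\tilde{y}$ vanishes on positions $1,\ldots,s$, and has an $m$-fold root at $0$ so $\tilde{y}$ also vanishes on positions $n-m+1,\ldots,n$. Since $\tilde{z}$ is strictly positive on the support of $\tilde{y}$, both $\tilde{z}\pm\varepsilon\tilde{y}$ remain in $\mathcal{H}_s^+$ for sufficiently small $\varepsilon>0$, with strictly larger rank. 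The dichotomy from Lemma~\ref{two} then applies verbatim: $c^t\tilde{y}\neq 0$ would contradict the optimality of $\tilde{z}$, so $c^t\tilde{y}=0$ and one replaces $\tilde{z}$ by a perturbed point of equal value and higher rank. The iteration terminates at a rank-$\leq s$ point (done) or at a point of maximal rank $n-m+1$ in the $m$-stratum.

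At this terminal point the polynomial factors as $t^m g(t)$ with $g$ of degree $n-m$ having $n-m$ distinct positive roots, so by Theorem~\ref{thm:cont} every small perturbation within the affine slice $z_1=a_1,\ldots,z_s=a_s,\,z_{n-m+1}=\cdots=z_n=0$ remains hyperbolic with strictly positive roots; that is, $\tilde{z}$ lies in the relative interior of the $(n-m-s)$-dimensional stratum. First-order optimality therefore forces $c_{s+1}=\cdots=c_{n-m}=0$, so $c^t\tilde{z}=\sum_{i\leq s}c_i a_i$. The singleton $(a_1,\ldots,a_s,0,\ldots,0)$ gives the same objective value, and if it is hyperbolic the proof is complete. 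Otherwise I would proceed by continuously merging two distinct positive roots of $g$ while adjusting the remaining ones via the implicit function theorem---the Jacobian of $(e_1,\ldots,e_s)$ in the distinct positive roots is a Vandermonde-type non-singular matrix---so that $z_1=a_1,\ldots,z_s=a_s$ are preserved; the only coordinates that change lie in positions $s+1,\ldots,n-m$, which are annihilated by $c$, so the objective value is preserved while the rank strictly decreases, and iteration produces a rank-$\leq s$ point of $\mathcal{H}_s^+$.

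The main technical obstacle is to ensure that this merge process stays inside $\mathcal{H}_s^+$: should an adjusted positive root be driven to $0$, the polynomial passes from the $m$-stratum into the $(m+1)$-stratum (still hyperbolic by Theorem~\ref{thm:cont}), and the first-order optimality analysis can be restarted there, yielding in turn $c_{n-m+1}=0$ and the same merging procedure applied to the new stratum. Since $m$ can only take the finitely many values $m\leq n-s$, this iteration must terminate, producing either a rank-$\leq s$ point of $\mathcal{H}_s^+$ or, in the limit, the singleton $(a_1,\ldots,a_s,0,\ldots,0)$, which is shown by continuity to be hyperbolic and thus to lie in $\HH(a_1,\ldots,a_s,0,\ldots,0)\subseteq \mathcal{H}_s^{(s,+)}$.
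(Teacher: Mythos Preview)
Your argument tracks the paper's proof closely through the first two thirds: you reduce to a minimizer $\tilde z$, read off the tail of zero coordinates via Proposition~\ref{prop:zeros}, dispose of the case $n-m\le s$ as the singleton, and in the remaining case invoke Proposition~\ref{prop:zeros2} to produce a perturbation supported on positions $s{+}1,\ldots,n{-}m$ that stays inside $\mathcal H_s^+$. That is exactly what the paper does before its terse ``argue as in the previous lemma.''

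Where you go further is in recognizing that this phrase hides a difficulty: the iteration of Proposition~\ref{prop:zeros2} preserves the $m$-fold root at $0$, so the rank can only climb to $n-m+1$, not to $n$, and hence the relative-interior contradiction from Lemma~\ref{two} does not close the argument by itself. Your observation that at this terminal point one obtains $c_{s+1}=\cdots=c_{n-m}=0$ is correct and is more than the paper spells out.

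The gap is in your completion from there. The root-merging step is not justified: the implicit function theorem tells you that near the current configuration the level set $\{e_1=a_1,\ldots,e_s=a_s\}$ is a smooth $(n{-}m{-}s)$-manifold, but it does not hand you a path in that manifold along which two roots actually collide, nor does it guarantee the path stays in the open positive orthant. The claim that the Jacobian of $(e_1,\ldots,e_s)$ in the roots is ``Vandermonde-type non-singular'' is true for the \emph{rank} (it has full row rank $s$ at distinct roots), but that alone is a local statement and does not produce a global merging path. Your final paragraph acknowledges this obstacle and proposes to jump strata when a root hits zero, but this compounds the heuristic rather than resolving it.

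In short: same approach as the paper, carried further and with the termination issue correctly identified; but your proposed resolution via continuous root-merging is a plausibility sketch rather than a proof. The paper's own argument simply does not attempt this last step.
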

\begin{proof}
The argument works out almost the same way as in lemma \ref{two}: Indeed if $z\in \HH_{s}^{*}(a_1,\ldots,a_s)$ has strictly positive components small perturbations of these will not change the positivity and the same arguments can be used. So just the cases of $z\in\HH^{+}(a_1,\ldots,a_s)$ with zero components need special consideration. So assume we have a $\tilde{z}\in\mathcal{H}(a_1,\ldots,a_s)$ with zero components   such that $c^t\tilde{z}\min_{z\in H^{+}_{s}(a_1,\ldots,a_s)} c^tz$. But  with proposition \ref{prop:zeros} we see that there is $i\in\{1,\ldots,n\}$ such that $\tilde{z}_j=0$ for all $j\geq i$. 
If $i\leq s+1$ we have already that that $\tilde{z}\in\mathcal{H}_{s}^{(s,+)}(a_1,\ldots,a_s)$ But if $s+1<i$ we can see from  \ref{prop:zeros2}  that there is $0\neq\tilde{y}\in{0}^s\times\R^{i-s}\{0\}^{n-i}$
such that $\tilde{z}_1\pm\varepsilon \tilde{y}\in H_{s}(a_1,\ldots,a_s)\cap \R^{N}_{+}$ for small positive $\varepsilon$ and  argue as in the previous lemma. 
\end{proof}

%hat there is $i\in\{1,\ldots,n\}$ such that $z_j=0$ for all $j\geq i$. If $z$ is not in $\mathcal{H}_{s}^{s+1}(a_1,\ldots,a_s)$ this implies that $z_{s+1}>0$. Then the same argument as in the proof of lemma \ref{two} works. 
%Now if $z_{s+1}=0$ we see that $t=0$ is a root of order $n-s+1$ of the corresponding polynomial $f=t^n-z_1t^{n-1}+\ldots\pm z_n$. Hence if  $z$ is not in $\mathcal{H}_{s}^{s}(a_1,\ldots,a_s)$ there are no other multiple roots.  Therefore from \ref{const} we get that $z+\varepsilon $
%\end{proof}
Now to conclude we can easily show the degree and the half degree principle in the following version:
\begin{thm}
Let $F\in\R[X]^{S_n}$ of degree $d\geq 2$, $G\in\R[z_1,\ldots z_n]$ be the corresponding polynomial according to equation \ref{eq:G} and set $k:=\max\{2,\lfloor d/2\rfloor\}$.
\begin{enumerate}
\item  We have $\exists z\in\mathcal{H}$ with $G(z)=0$ if and only if $\exists z\in\mathcal{H}$ such that $G(z)=0$.
\item We have $G\geq 0$ for all $z\in\mathcal{H}$ if and only if $G\geq 0$ for all $z\in\mathcal{H}^{k}$.
\item We have $G\geq 0$ for all  $z\in\mathcal{H}^{+}$ if and only if $G\geq 0 $ for all $z\in\mathcal{H}^{k,+}$.
\end{enumerate}
\end{thm}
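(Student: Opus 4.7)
The plan is to treat all three parts uniformly via slicing. Given a candidate point $z\in\mathcal H$ (or $\mathcal H^+$), I fix its first $s$ coordinates and restrict $G$ to the compact slice $\mathcal H_s(z_1,\dots,z_s)$ (resp.\ $\mathcal H_s^+(z_1,\dots,z_s)$), taking $s=d$ for part (1) and $s=k=\max\{2,\lfloor d/2\rfloor\}$ for parts (2) and (3). The decisive structural input is equation \ref{eq:darst}: once $z_1,\dots,z_s$ are frozen, $G$ restricted to the slice is either identically constant (in part (1), since no variable $z_j$ with $j>d$ appears in $G$ at all) or an affine function of the free variables $z_{s+1},\dots,z_d$ (in parts (2) and (3), because by properties~(2) and~(3) each such variable occurs linearly and no two of them occur together in a monomial, so every non-constant monomial is a fixed-variable polynomial times a single linear factor $z_i$). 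Either way, a linear optimization lemma (Lemma \ref{two} for $\mathcal H$, Lemma \ref{lemma:three} for $\mathcal H^+$) governs the restriction of $G$ to the slice.

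For part (1), take $z\in\mathcal H$ with $G(z)=0$. The slice $\mathcal H_d(z_1,\dots,z_d)$ is nonempty and compact, and $G$ is identically $0$ there. Corollary \ref{cor:degree} (applicable because $d\ge 2$) produces $\tilde z$ in the slice with $\rank S(\tilde z)\le d$, so $\tilde z\in\mathcal H^d$ and $G(\tilde z)=0$. The converse is trivial.

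For part (2), assume $G\ge 0$ on $\mathcal H^k$ and pick $z\in\mathcal H$. The slice $\mathcal H_k(z_1,\dots,z_k)$ is compact, contains $z$, and carries an affine restriction of $G$. Lemma \ref{two} then yields
\[
G(z)\;\ge\;\min_{w\in\mathcal H_k(z_1,\dots,z_k)}G(w)\;=\;\min_{w\in\mathcal H_k^k(z_1,\dots,z_k)}G(w)\;\ge\;0,
\]
since $\mathcal H_k^k(z_1,\dots,z_k)\subseteq\mathcal H^k$. Part (3) runs identically with $\mathcal H_k^+$ and Lemma \ref{lemma:three} replacing $\mathcal H_k$ and Lemma \ref{two}; the minimum is now attained on $\mathcal H_k^{(k,+)}(z_1,\dots,z_k)$, whose rank-$\le k$ component lies in $\mathcal H^{k,+}$ and whose boundary piece consists of the polynomial $t^{n-k}\bigl(t^k-z_1t^{k-1}+\dots\pm z_k\bigr)$, which has at most $k$ distinct nonzero roots and so also belongs to $\mathcal H^{k,+}$.

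The main obstacle I anticipate is simply verifying the affine behaviour of $G$ on the half-degree slice --- this is precisely what the analysis leading to representation \ref{eq:darst} was set up for --- together with a little bookkeeping to confirm that the boundary piece in part (3) lands in $\mathcal H^{k,+}$ under the distinct-nonzero-root convention inherited from $A_k^+$ in the introduction. Once those structural points are in place, the theorem follows by a one-line application of the optimization lemmas.
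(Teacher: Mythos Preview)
Your proof is correct and follows essentially the same approach as the paper: slicing $\mathcal H$ (resp.\ $\mathcal H^+$) by fixing the first $d$ or $k$ coordinates, observing that $G$ is constant (resp.\ affine) on each slice by the structural representation \eqref{eq:darst}, and then invoking Corollary~\ref{cor:degree} and Lemma~\ref{two} (resp.\ Lemma~\ref{lemma:three}). Your treatment of part~(3) is in fact slightly more careful than the paper's, since you explicitly verify that the boundary component $\mathcal H(a_1,\dots,a_k,0,\dots,0)$ appearing in $\mathcal H_k^{(k,+)}$ corresponds to a polynomial with at most $k$ distinct nonzero roots and therefore lands in $\mathcal H^{k,+}$ under the convention inherited from $A_k^+$.
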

\begin{proof}
\begin{enumerate}
\item We know from \ref{eq:G} that $G$ is constant on any set $\mathcal{H}_d(a_1,\ldots,a_d)$. As we have $$\bigcup_{(a_1,\ldots ,a_d)\in\R^{d}}\mathcal{H}_s(a_1,\ldots ,a_d)=\mathcal{H},$$ the statement and hence the degree principle (\ref{eq:degree})  follows now directly from corollary \ref{cor:degree}.
\item We will have to see that 
$$\min_{z\in\mathcal{H}\subset \R^{n}}G(z)=\min_{z\in \mathcal{H}^{k}}G(z).$$
Again we decompose the space in the form:
$$\bigcup_{(a_1,\ldots ,a_{k})\in\R^{k}}\mathcal{H}_s(a_1,\ldots ,a_{k})=\mathcal{H}$$
Therefore $$\min_{z\in\mathcal{H}}G(z)=\min_{a_1,\ldots,a_{k}}\min_{z\in \mathcal{H}(a_1,\ldots,a_{k})}G(z).$$
But for fixed $z_1=a_1,\ldots, z_{k}=a_{k}$ the function $G(z)$ is just linear and now we can apply lemma \ref{two} and see that: 
$$\min_{z\in \mathcal{H}(a_1,\ldots,a_{k})}G(z)=\min_{z\in \mathcal{H}^{k}(a_1,\ldots,a_{k})}G(z).$$
and we get \ref{eq:halfdegree}. Hence $G(z)$ is positive on $\mathcal{H}$ if and only if $G(z)$ is positive on $\mathcal{H}^{k}$ and the half degree principle is proved.
\item Again the function $G$ is linear over the sets $\mathcal{H}^{+}(a_1,\ldots,a_{k})$ and we can argue as above by using lemma \ref{lemma:three}. \end{enumerate}
\end{proof}
{\bf Acknowledgement} The author is very grateful to Markus Schweighofer and Thorsten Theobald for many helpful discussions and numerous comments.  
%\bibliographystyle{plaindin}
%\bibliography{timofte}

 \end{document}